\newtheorem{theorem}{Theorem}
\theoremstyle{plain}
\newtheorem{corollary}{Corollary}
\newtheorem{example}{Example}
\newtheorem{lemma}{Lemma}
\newtheorem{proposition}{Proposition}
\numberwithin{equation}{section}
\begin{document}
\title{On the rational approximations to the real numbers corresponding to the differences of the Fibonacci sequence}
\author{Ying-jun Guo, Zhi-xiong Wen and Jie-Meng Zhang}



\maketitle

\begin{abstract}
Based on the structure of Fibonacci sequence, we give a new proof for the irrationality exponents of the Fibonacci real numbers. Moreover, we obtain all the irrationality exponents of the real numbers corresponding to the differences of Fibonacci sequence.
\end{abstract}    

\maketitle   

\section{Introduction}
In 2007, Adamczewski and Allouche \cite{AA07} proved that the irrationality exponent of the Fibonacci real numbers equals $1+\frac{1+\sqrt{5}}{2}$. For this, they studied the real numbers
\begin{equation*}
    S_{b}(\alpha)=(b-1)\sum_{n\geq1}\frac{1}{b^{\lfloor n\alpha \rfloor}}
\end{equation*}
for any irrational $\alpha$ and any integer $b$, both larger than $1$. By the continued fraction expansion \footnote{It was originally discovered by B\"{o}hmer \cite{B26}. It was independently
rediscovered by Danilov \cite{Danilov72}, Davison \cite{Davison77}, Adams and Davison \cite{AD77}, Bullett and Sentenac \cite{BS94}, and Shiu \cite{S99}.} of $S_{b}(\alpha)$, 
~they proved that the irrationality exponent of the number $S_{b}(\alpha)$ equals $1+\limsup_{n\rightarrow\infty}[a_n,a_{n-1},\cdots,a_0]$ for any irrational number $\alpha=[a_0,a_1,a_2,\cdots]>1$ and any integer $b\geq2$.
Recall that the \emph{irrationality exponent} (sometimes called the \emph{irrationality measure}) of an irrational real number $\xi$, denoted by $\mu(\xi)$, is defined as the supremum of the set of real numbers $\mu$ such
that the inequality
\begin{eqnarray*}
\left|\xi-\frac{p}{q}\right|<\frac{1}{q^{\mu}}
\end{eqnarray*}
holds for infinitely many $\frac{p}{q}\in\mathbb{Q}$. Clearly, $\mu(\xi)\geq2$ for any irrational number $\xi$ and for almost all real numbers, including all the algebraic irrational numbers, $\mu(\xi)=2$.

In \cite{AY07}, Adamczewski and Bugeaud proved that every irrational automatic or binary morphic real number (whose $b$-expansion is an automatic or morphic sequence for some integer $b\geq2$) is transcendental. Then many automatic transcendental numbers have been studied in \cite{AC06,AR09,Bugeaud11,CV12,C12,GWW14}. Moreover, Bugeaud, Krieger and Shallit conjectured in \cite{BKS11} that the irrationality exponent of every automatic(resp.,morphic) number is rational (resp.,algebraic).
In this paper, we are interested in the irrationality exponents of the binary morphic real numbers $\xi_{\varepsilon,b}$ for any integer $b\geq2$, where
\begin{equation*}
 \xi_{\varepsilon,b}:=\sum_{i\geq0}\frac{\varepsilon_{i}}{b^{i}}=\varepsilon_{0}+\frac{\varepsilon_{1}}{b}+\frac{\varepsilon_{2}}{b^{2}}+\cdots,
\end{equation*}
and $\varepsilon=\varepsilon^{k}:=\varepsilon_{0}\varepsilon_{1}\varepsilon_{2}\cdots $ is the fixed point of the morphism
\begin{equation}\label{formula1}
\sigma(0)=0^{k}1,\sigma(1)=0,(k\geq1).
\end{equation}

Note that the morphism $\sigma$ defined by (\ref{formula1}) is an invertible substitution \cite{WW93,WW94,WWW02}. Its fixed point $\varepsilon$ is a Sturmian sequence and studied by many authors \cite{AS03,Tamura99,KTW99,WW94}. In this paper, we will study the structure of $\varepsilon$. Based on the structure, we shall prove following theorem.
\begin{theorem}\label{theorem1}
For any integers $b\geq2,k\geq1$, we have $\mu(\xi_{\varepsilon^{k},b})=1+\frac{k+\sqrt{k^{2}+4}}{2}$.
\end{theorem}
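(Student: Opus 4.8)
The plan is to identify $\xi_{\varepsilon^{k},b}$ with a rational multiple of $S_{b}(\lambda+1)$, where $\lambda:=\frac{k+\sqrt{k^{2}+4}}{2}$ is the positive root of $x^{2}=kx+1$, and then to apply the continued fraction formula for $S_{b}(\alpha)$ recalled in the introduction.

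First I would record the combinatorial skeleton of $\varepsilon=\varepsilon^{k}$. Set $A_{m}=\sigma^{m}(0)$, so $A_{0}=0$, $A_{1}=0^{k}1$, and, using $\sigma^{m}(1)=\sigma^{m-1}(0)=A_{m-1}$, one gets the recursion $A_{m+1}=\sigma^{m}(0^{k}1)=A_{m}^{\,k}A_{m-1}$ for $m\ge 1$. Hence each $A_{m}$ is a prefix of $A_{m+1}$, and so of $\varepsilon^{k}$; the lengths $\ell_{m}=|A_{m}|$ and the numbers $c_{m}$ of $1$'s in $A_{m}$ both obey $u_{m+1}=ku_{m}+u_{m-1}$, whence $\ell_{m+1}/\ell_{m}\to\lambda$ and (using $|A_{m}|=c_{m}+c_{m+1}$) the frequency of $1$ in $\varepsilon^{k}$ is $\theta:=1/(\lambda+1)$. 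Since $\sigma$ is an invertible substitution, $\varepsilon^{k}$ is Sturmian, as noted in the introduction; the step I would then pin down is that, being the fixed point of $\sigma$ with $\sigma(0)$ beginning in $0$, it is exactly the characteristic Sturmian word of slope $\theta$, that is, $\varepsilon^{k}_{i}=\lfloor (i+2)\theta\rfloor-\lfloor (i+1)\theta\rfloor$ for all $i\ge 0$.

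Granting this, a short telescoping count (and the irrationality of $1/\theta=\lambda+1$) shows that the $m$-th index $i$ with $\varepsilon^{k}_{i}=1$ equals $\lceil m/\theta\rceil-2=\lfloor m(\lambda+1)\rfloor-1$, so that
\begin{equation*}
\xi_{\varepsilon^{k},b}=\sum_{m\ge 1}b^{-(\lfloor m(\lambda+1)\rfloor-1)}=b\sum_{m\ge 1}b^{-\lfloor m(\lambda+1)\rfloor}=\frac{b}{b-1}\,S_{b}(\lambda+1).
\end{equation*}
Since the irrationality exponent is invariant under multiplication by a non-zero rational, $\mu(\xi_{\varepsilon^{k},b})=\mu\big(S_{b}(\lambda+1)\big)$. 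From $\lambda=k+1/\lambda$ we have $\lambda+1=[k+1,k,k,k,\cdots]$, so the quoted formula gives $\mu(\xi_{\varepsilon^{k},b})=1+\limsup_{n\to\infty}x_{n}$, where $x_{n}$ denotes the finite continued fraction $[k,k,\cdots,k,k+1]$ with exactly $n$ entries equal to $k$. These satisfy $x_{n}>k$ and $x_{n+1}=k+1/x_{n}$, hence $|x_{n+1}-\lambda|=|x_{n}-\lambda|/(x_{n}\lambda)\le|x_{n}-\lambda|/(k\lambda)$ with $k\lambda=\lambda^{2}-1>1$; therefore $x_{n}\to\lambda$, so $\limsup_{n}x_{n}=\lambda$ and $\mu(\xi_{\varepsilon^{k},b})=1+\lambda=1+\frac{k+\sqrt{k^{2}+4}}{2}$.

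The one genuinely delicate step is the last claim of the second paragraph: determining not merely the slope but the intercept of the Sturmian word $\varepsilon^{k}$, so that its occurrences of $1$ form exactly the Beatty sequence $\big(\lfloor m(\lambda+1)\rfloor-1\big)_{m\ge 1}$ and not a shifted version of it. I expect this to come out of the skeleton $A_{m+1}=A_{m}^{k}A_{m-1}$ together with the classical description of characteristic words, or from a direct induction matching $A_{m}$ against the standard words of slope $\theta$; the rest is bookkeeping. If one wanted instead a self-contained argument that does not invoke the continued fraction of $S_{b}(\alpha)$ — in particular a new proof of the Fibonacci case $k=1$ — the lower bound $\mu(\xi_{\varepsilon^{k},b})\ge 1+\lambda$ already falls out of the skeleton: from $A_{m+2}=(A_{m}^{k}A_{m-1})^{k}A_{m}$ one checks that $\varepsilon^{k}$ agrees with the purely periodic word $(A_{m})^{\infty}$ on a prefix of length $\ell_{m+1}+\ell_{m}-O(1)$, so the rational with $b$-ary expansion $0.\overline{A_{m}}$, of denominator $b^{\ell_{m}}-1$, approximates $\xi_{\varepsilon^{k},b}$ within $b^{-\ell_{m+1}-\ell_{m}+O(1)}$ — an exponent tending to $1+\lambda$. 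The matching upper bound is then the real obstacle on that route, amounting to showing that $\varepsilon^{k}$ has no prefix that is a (possibly fractional) $e$-th power with $e>1+\lambda+o(1)$, so that these approximations are essentially optimal between consecutive denominators.
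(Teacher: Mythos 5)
Your proof is correct in substance but goes by a genuinely different --- in fact opposite --- route from the paper's. You convert $\xi_{\varepsilon^{k},b}$ into $\frac{b}{b-1}S_{b}(\lambda+1)$ via the identification of $\varepsilon^{k}$ with the characteristic Sturmian word of slope $1/(\lambda+1)$ (the skeleton $A_{m+1}=A_{m}^{k}A_{m-1}$ is the standard-word recursion for slope $[0;k+1,k,k,\dots]$, so the ``delicate'' intercept step you flag is indeed classical and fillable), and then you invoke the B\"{o}hmer--Danilov--Davison--Adamczewski--Allouche formula $\mu(S_{b}(\alpha))=1+\limsup_{n}[a_{n},\dots,a_{0}]$; your location of the ones at positions $\lfloor m(\lambda+1)\rfloor-1$ and the limsup computation $[k,\dots,k,k+1]\to\lambda$ both check out, so this is a valid derivation of the statement from known results. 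However, it imports exactly the continued-fraction machinery that the paper is written to avoid: the paper's stated purpose is a \emph{new} proof, and it recovers the Adamczewski--Allouche theorem as a corollary rather than using it as input. The paper instead argues structurally: the identity $U_{n-1}U_{n}=U_{n}U_{n-1}^{*}$ gives the near-periodicity $\varepsilon_{i}=\varepsilon_{i+f_{n}}$ for $0\le i\le f_{n+1}-3$, a Zeckendorf-type numeration system together with Lemma \ref{lemma4} determines exactly where (and with which sign) $\varepsilon_{i+f_{n}}\neq\varepsilon_{i}$, and this produces the two-sided estimate (\ref{formula3}) on $\left|\xi-\frac{p_{n}}{q_{n}}\right|$ with $q_{n}=b^{f_{n}}-1$, to which the Adamczewski--Rivoal criterion (Lemma \ref{lemma6}) applies. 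That two-sided estimate is precisely the ``matching upper bound'' you identify as the real obstacle on your sketched self-contained alternative and do not carry out. In short, your route buys brevity at the price of relying on the deepest external ingredient, while the paper's route is longer but self-contained and obtains the upper bound for $\mu$ by explicit control of the error term.
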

In particular, when $k=1$, the sequence $\varepsilon^{1}$ is the famous Fibonacci sequence. For any integer $b\geq2$, the real number $\xi_{\varepsilon^{1},b}$ is called to be the \emph{Fibonacci real number}. Hence, we have following corollary.
\begin{corollary}\label{c1}
For any integer $b\geq2$, we have $\mu(\xi_{\varepsilon^{1},b})=1+\frac{1+\sqrt{5}}{2}$.
\end{corollary}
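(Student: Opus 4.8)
The plan is to read off $\mu(\xi_{\varepsilon^{k},b})$ from the $b$-adic structure of the fixed point $\varepsilon:=\varepsilon^{k}$, which I would first make explicit. Put $W_{n}:=\sigma^{n}(0)$. Since $\sigma(1)=0=W_{0}$ we get $\sigma^{n}(1)=\sigma^{n-1}(0)=W_{n-1}$, hence
\[
W_{n+1}=\sigma^{n}(\sigma(0))=\sigma^{n}(0^{k}1)=W_{n}^{\,k}W_{n-1}\qquad(n\ge1),\qquad W_{0}=0,\quad W_{1}=0^{k}1.
\]
Thus $W_{n-1}$ is a prefix of $W_{n}$, $\varepsilon=\lim_{n}W_{n}$, and the lengths $f_{n}:=|W_{n}|$ satisfy $f_{0}=1$, $f_{1}=k+1$, $f_{n+1}=kf_{n}+f_{n-1}$, so $f_{n+1}/f_{n}\to\gamma$, the positive root of $X^{2}=kX+1$, that is $\gamma=\tfrac{k+\sqrt{k^{2}+4}}{2}$. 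I would use the factorization $W_{n+1}=W_{n}^{k}W_{n-1}$ twice: to produce explicit good rational approximations (giving $\mu\ge1+\gamma$), and to identify $\varepsilon$ with a classical Sturmian word so as to pin down the exact value.

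For the lower bound I would exploit the near-periodicity hidden in $W_{n+1}=W_{n}^{k}W_{n-1}$. Because $\varepsilon$ begins with $W_{n}^{k}W_{n-1}W_{n}$ and $W_{n-1}$ is a prefix of $W_{n}$, an induction on $n$ through the factorizations shows that $\varepsilon$ and the purely periodic word $(W_{n})^{\infty}$ share a common prefix of length at least $\ell_{n}:=f_{n+1}+f_{n}-2$ (in fact of length exactly $\ell_{n}$: the prefix $W_{n+2}=W_{n+1}^{k}W_{n}$ of $\varepsilon$ also has period $f_{n+1}$, and since $\gcd(f_{n},f_{n+1})=1$ the Fine--Wilf theorem forbids the common period $f_{n}$ from reaching length $f_{n}+f_{n+1}-1$). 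Setting $q_{n}:=b^{f_{n}}-1$ and letting $p_{n}$ be the integer with $p_{n}/q_{n}=\sum_{i\ge0}\varepsilon_{(i\bmod f_{n})}\,b^{-i}$, the $b$-expansions of $\xi:=\xi_{\varepsilon,b}$ and of $p_{n}/q_{n}$ agree up to index $\ell_{n}-1$, so $|\xi-p_{n}/q_{n}|\le\tfrac{b}{b-1}b^{-\ell_{n}}$ while $q_{n}<b^{f_{n}}$. As $\ell_{n}/f_{n}\to1+\gamma$ and the $q_{n}$ are distinct and tend to infinity, this gives $\mu(\xi)\ge1+\gamma$.

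For the matching upper bound I would identify $\varepsilon$ precisely. The factorization $W_{n+1}=W_{n}^{k}W_{n-1}$ (with $W_{1}=0^{k}1$) exhibits $\varepsilon$ as the characteristic Sturmian word with directive sequence $(k,k,k,\dots)$, that is, of slope $\theta_{k}=1/(\gamma+1)=[0;k+1,\overline{k}]$ (its frequency of $1$'s), so that $1/\theta_{k}=[k+1;\overline{k}]=\gamma+1$. Consequently the positions of the $1$'s of $\varepsilon$ form, up to a bounded shift, the Beatty sequence $\{\lfloor m(\gamma+1)\rfloor:m\ge1\}$, whence $\xi_{\varepsilon,b}$ equals $\tfrac1{b-1}S_{b}(\gamma+1)$ up to multiplication by an integer power of $b$ and addition of a rational; as the irrationality exponent is invariant under these operations, $\mu(\xi_{\varepsilon,b})=\mu(S_{b}(\gamma+1))$. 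Since $\gamma+1=[k+1;k,k,k,\dots]>1$, the Adamczewski--Allouche formula quoted above yields $\mu(S_{b}(\gamma+1))=1+\limsup_{n}[k;k,\dots,k,k+1]$ (with $n$ partial quotients equal to $k$), and these truncated continued fractions converge to $[k;\overline{k}]=\gamma$; hence $\mu(\xi_{\varepsilon,b})=1+\gamma=1+\tfrac{k+\sqrt{k^{2}+4}}{2}$, and Corollary~\ref{c1} is the case $k=1$, where $\gamma=\tfrac{1+\sqrt5}{2}$.

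The only step that is not essentially formal is the periodicity bookkeeping in the lower bound: one must verify, carefully and by induction through the words $W_{n}$, that $\varepsilon$ and $(W_{n})^{\infty}$ really do agree up to position $\ell_{n}=f_{n+1}+f_{n}-2$ (equivalently, that the Fine--Wilf bound is attained). This is the heart of the argument, and it is also what one needs for a proof independent of \cite{AA07}: one can instead derive the B\"ohmer-type continued fraction of $\xi_{\varepsilon,b}$ directly from $W_{n+1}=W_{n}^{k}W_{n-1}$, so that the convergent denominators are comparable to the $q_{n}=b^{f_{n}}-1$ above and the upper bound $\mu(\xi)\le1+\gamma$ follows from the same extremality.
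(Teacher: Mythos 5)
Your argument is correct in substance, but its two halves sit very differently with respect to the paper. The lower bound is essentially the paper's own: your approximants with $q_{n}=b^{f_{n}}-1$ are exactly the paper's $p_{n}/q_{n}=b^{f_{n}}P_{n}(1/b)/(b^{f_{n}}-1)$, and your claim that $\varepsilon$ and $(W_{n})^{\infty}$ share a prefix of length $f_{n+1}+f_{n}-2$ is precisely the content of the paper's Lemma 1(1) (the identity $U_{n-1}U_{n}=U_{n}U_{n-1}^{*}$) and its Corollary ($\varepsilon_{i}=\varepsilon_{i+f_{n}}$ for $0\le i\le f_{n+1}-3$); you assert this ``by induction'' and rightly flag it as the heart of the matter, so that induction does need to be written out (it is a short computation once the starred identity is set up), though the Fine--Wilf observation you add for exactness is a nice touch the paper does not need. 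Where you genuinely diverge is the upper bound: you identify $\varepsilon$ with the characteristic Sturmian word of slope $1/(\gamma+1)$ and invoke the Adamczewski--Allouche continued-fraction formula for $\mu(S_{b}(\alpha))$. This is logically valid, but it runs opposite to the paper's declared purpose, which is to prove the result \emph{without} that formula --- the paper derives the Adamczewski--Allouche theorem as a consequence of Corollary 1, not the reverse. The paper instead obtains the upper bound internally: Lemma 4 determines exactly at which indices $\varepsilon_{i+f_{n}}\neq\varepsilon_{i}$ and with which signs, which yields the two-sided estimate $c_{1}q_{n}^{-(1+f_{n+1}/f_{n})}\le|\xi-p_{n}/q_{n}|\le c_{2}q_{n}^{-(1+f_{n+1}/f_{n})}$ together with $q_{n+1}\le c_{0}q_{n}^{f_{n+1}/f_{n}}$, and the Adamczewski--Rivoal criterion (Lemma 6) then gives $\mu\le 1+\gamma$. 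In short, your route buys brevity by importing the very theorem the paper reproves; the paper's route buys self-containedness at the price of the case analysis in Lemma 4. Your closing suggestion of deriving the B\"ohmer-type continued fraction directly from $W_{n+1}=W_{n}^{k}W_{n-1}$ would restore independence, but as stated it is a pointer rather than a proof.
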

In fact, Corollary \ref{c1} gives a new proof for Adamczewski and Allouche's result. To see this, take $\alpha=\frac{1+\sqrt{5}}{2}=[1,1,\cdots]$, define two binary sequences
\begin{equation*}
  g_{\alpha}(n)=\left\{ \begin{array}{ll}
  1, & \hbox{if $n=\lfloor k\alpha \rfloor$ for some integer $k$;} \\
  0, & \hbox{otherwise.}                                                                                                                            \end{array}
   \right.
\end{equation*}
and $f_{\frac{1}{\alpha}}(n)=\lfloor (n+1)\frac{1}{\alpha} \rfloor-\lfloor n\frac{1}{\alpha} \rfloor$. Then, by Lemma 9.1.3 and Corollary 9.1.6 in \cite{AS03}, the sequence $f_{\frac{1}{\alpha}}(n)=1-\varepsilon_{n}$ and
\begin{equation*}
 \sum_{n\geq1}\frac{1}{b^{\lfloor n\alpha \rfloor}}=\sum_{n\geq1}\frac{g_{\alpha}(n)}{b^{n}}=\sum_{n\geq1}\frac{f_{1/\alpha}(n)}{b^{n}}=\sum_{n\geq1}\frac{1-\varepsilon_{n}}{b^{n}}.
\end{equation*}
Hence, $S_{b}(\alpha)=(b-1)(\frac{1}{b-1}-\xi_{\varepsilon^{1},b})=1-(b-1)\xi_{\varepsilon^{1},b}$. Hence, Corollary \ref{c1} implies the following theorem.
\begin{theorem}[Adamczewski and Allouche \cite{AA07}]
If $\alpha=\frac{1+\sqrt{5}}{2}=[1,1,\cdots]$, then for any integer $b\geq2$, $\mu(S_{b}(\alpha))=1+\frac{1+\sqrt{5}}{2}$.
\end{theorem}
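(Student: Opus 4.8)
The plan is to obtain this theorem as an immediate consequence of Corollary~\ref{c1}, using the affine identity already recorded just above its statement together with the elementary invariance of the irrationality exponent under rational affine (indeed, homographic) changes of variable. First I would recall that for $\alpha=\frac{1+\sqrt5}{2}$ one has $f_{1/\alpha}(n)=1-\varepsilon_n$ (Lemma~9.1.3 and Corollary~9.1.6 in \cite{AS03}), so that
\begin{equation*}
 S_b(\alpha)\;=\;(b-1)\sum_{n\ge1}\frac{1}{b^{\lfloor n\alpha\rfloor}}\;=\;(b-1)\Big(\tfrac{1}{b-1}-\xi_{\varepsilon^1,b}\Big)\;=\;1-(b-1)\,\xi_{\varepsilon^1,b},
\end{equation*}
exhibiting $S_b(\alpha)=\varphi(\xi_{\varepsilon^1,b})$ with $\varphi(x)=1-(b-1)x$, a bijection of $\mathbb{R}$ with rational coefficients.

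Next I would record (and, if asked, prove in a couple of lines) the standard fact that $\mu(\varphi(\xi))=\mu(\xi)$ for every irrational $\xi$ whenever $\varphi(x)=(rx+s)/(tx+u)$ has rational coefficients with $ru-st\ne0$. In the present affine case this is especially transparent: for $p/q$ in lowest terms, $\varphi(p/q)=(q-(b-1)p)/q$ has reduced denominator at most $q$, while $|\varphi(\xi)-\varphi(p/q)|=(b-1)\,|\xi-p/q|$; thus any family of rationals with $|\xi-p/q|<q^{-\mu}$ for infinitely many members is carried, up to a fixed multiplicative constant in both the error and the denominator, to a family witnessing $\mu(\varphi(\xi))\ge\mu'$ for every $\mu'<\mu$, and applying the same to $\varphi^{-1}$ gives the reverse inequality; bounded multiplicative perturbations of the denominator do not affect the supremum defining the exponent. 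Combining this with Corollary~\ref{c1}, which asserts $\mu(\xi_{\varepsilon^1,b})=1+\frac{1+\sqrt5}{2}$ for every integer $b\ge2$, I conclude $\mu(S_b(\alpha))=1+\frac{1+\sqrt5}{2}$, as claimed.

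The only genuine difficulty lies upstream, in Corollary~\ref{c1} itself --- the case $k=1$ of Theorem~\ref{theorem1}: one must exploit the self-similar combinatorial structure of the Fibonacci fixed point $\varepsilon^1$ of $\sigma\colon0\mapsto01,\ 1\mapsto0$ to write down infinitely many rational approximations to $\xi_{\varepsilon^1,b}$ whose height is controlled precisely enough to pin the exponent down to $1+\frac{1+\sqrt5}{2}$, rather than merely bound it from below. An alternative worth mentioning, closer to the original proof, is to invoke the B\"{o}hmer--Danilov--Davison--$\cdots$ continued fraction expansion of $S_b(\alpha)$ together with Adamczewski--Allouche's formula $\mu(S_b(\alpha))=1+\limsup_{n\to\infty}[a_n,a_{n-1},\dots,a_0]$, and then observe that for $\alpha=[1,1,1,\dots]$ every partial quotient equals $1$, so $[a_n,\dots,a_0]=[\underbrace{1,\dots,1}_{n+1}]$ converges to $\frac{1+\sqrt5}{2}$ and the $\limsup$ is exactly $\frac{1+\sqrt5}{2}$; but in keeping with the method of this paper I would take the structural route through $\varepsilon$.
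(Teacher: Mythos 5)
Your proposal is correct and follows essentially the same route as the paper: the paper derives the identity $S_{b}(\alpha)=1-(b-1)\xi_{\varepsilon^{1},b}$ and then deduces the theorem directly from Corollary~\ref{c1}, implicitly using the invariance of the irrationality exponent under rational affine maps, which you have simply made explicit. Your filled-in justification of that invariance is sound, so there is nothing to correct.
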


Not only we obtain the irrationality exponents corresponding to the sequence $\varepsilon$, but also we give a new property of the irrationality exponent corresponding to the Sturmian sequence. We prove that the irrationality exponent is invariant under direct product with a shift of the original Sturmian sequence.

\begin{proposition}\label{p1}
For any Sturmian sequence $\mathbf{u}\in\{0,1\}^{\mathbb{N}}$, assume the sequence $\mathbf{u}\times S(\mathbf{u}):=\{[u_{i},u_{i+1}]\}_{i\geq0}$ is a non-eventually periodic integer sequence. If their exists an integer $a\geq2$ such that $\mu(\xi_{\mathbf{u},b})=a$ for any $b\geq2$, then $\mu(\xi_{\mathbf{u}\times S(\mathbf{u}),b})=a$ for any integer $b\geq2$.
\end{proposition}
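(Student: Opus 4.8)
The plan is to reduce the statement to the elementary invariance of the irrationality exponent under rational affine changes of variable, by showing that $\xi_{\mathbf{u}\times S(\mathbf{u}),b}$ is such a transform of $\xi_{\mathbf{u},b}$. The key is a combinatorial feature of Sturmian words: $\mathbf{u}$ has exactly three factors of length $2$, so one of the blocks $00$, $11$ never occurs in it. Hence the pairs $(u_i,u_{i+1})$ that occur along $\mathbf{u}$ lie among three affinely independent points of $\{0,1\}^2\subset\mathbb{R}^2$, so for the fixed injective integer labeling $[\,\cdot,\cdot\,]$ that turns $\mathbf{u}\times S(\mathbf{u})$ into an integer sequence there are integers $c_0,c_1,c_2$ with
\[
[u_i,u_{i+1}]=c_0+c_1u_i+c_2u_{i+1}\qquad(i\ge0);
\]
in particular the would-be quadratic term $u_iu_{i+1}$ drops out. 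For the labeling $[x,y]=2x+y$, say, one has $c_0=0$, $c_1=2$, $c_2=1$.

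Summing the defining series termwise, and using $\sum_{i\ge0}u_i/b^i=\xi_{\mathbf{u},b}$, $\sum_{i\ge0}u_{i+1}/b^i=b(\xi_{\mathbf{u},b}-u_0)$ and $\sum_{i\ge0}1/b^i=b/(b-1)$, I get
\[
\xi_{\mathbf{u}\times S(\mathbf{u}),b}=A\,\xi_{\mathbf{u},b}+r,\qquad A:=c_1+b\,c_2\in\mathbb{Q},\ \ r\in\mathbb{Q}.
\]
Here $A\ne0$: indeed $(c_1,c_2)\ne(0,0)$ because the labeling is injective, and if $A=0$ then $\xi_{\mathbf{u}\times S(\mathbf{u}),b}$ would be rational, which the non-eventual-periodicity hypothesis on $\mathbf{u}\times S(\mathbf{u})$ forbids — and for $[x,y]=2x+y$ this is automatic, since $A=b+2>0$. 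Since $\mu(\xi_{\mathbf{u},b})=a\ge2$, the number $\xi_{\mathbf{u},b}$ is irrational, hence $\xi_{\mathbf{u}\times S(\mathbf{u}),b}=A\xi_{\mathbf{u},b}+r$ is irrational too, so $\mu(\xi_{\mathbf{u}\times S(\mathbf{u}),b})$ is a well-defined real number $\ge2$.

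It now suffices to recall that $\mu(A\eta+r)=\mu(\eta)$ for any irrational $\eta$, any $A\in\mathbb{Q}\setminus\{0\}$ and any $r\in\mathbb{Q}$. This is standard: clearing denominators in $A$ and $r$, a rational $p/q$ satisfies $|A\eta+r-p/q|=\kappa\,|\eta-p'/q'|$ for a fixed $\kappa>0$, where $p'/q'$ is rational with $q'$ at most a fixed multiple of $q$; thus a family of inequalities $|A\eta+r-p/q|<q^{-\mu}$ with infinitely many solutions corresponds to such a family for $\eta$, and conversely via the inverse affine map $\eta\mapsto A^{-1}\eta-A^{-1}r$. Since the supremum defining $\mu$ is insensitive to bounded multiplicative factors in the error and in the denominator, the two exponents coincide. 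Taking $\eta=\xi_{\mathbf{u},b}$ gives $\mu(\xi_{\mathbf{u}\times S(\mathbf{u}),b})=\mu(\xi_{\mathbf{u},b})=a$, and since $b\ge2$ was arbitrary the proposition follows.

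The argument is thus entirely pointwise in $b$, and its one substantive ingredient is the affine identity for $[u_i,u_{i+1}]$, which rests on the Sturmian fact that $00$ or $11$ is a forbidden factor. I expect the only steps that need a little care to be the verification that $A\ne0$ in full generality — equivalently, committing to the labeling used and tracing how the non-eventual-periodicity hypothesis enters there — and writing out the quantitative form of the affine-invariance of $\mu$; both are routine once the affine identity is in hand.
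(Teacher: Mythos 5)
Your proposal is correct and follows essentially the same route as the paper: the Sturmian word has exactly three factors of length $2$, so $[u_i,u_{i+1}]$ is an affine function $a_0u_i+a_1u_{i+1}+a_2$ of the pair, whence $\xi_{\mathbf{u}\times S(\mathbf{u}),b}=(a_0+a_1b)\xi_{\mathbf{u},b}+\text{(rational)}$ and the irrationality exponent is preserved. If anything you are slightly more careful than the paper, which only remarks that one of $a_0,a_1$ is nonzero without addressing whether the combined coefficient $a_0+a_1b$ could vanish.
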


Recently,  
Guo and Wen  proved in \cite{GW14} that all the irrationality exponents of the differences are equal to 2. Recall that the difference of an binary sequence $\mathbf{u}=u_{0}u_{1}u_{2}\cdots$, denoted by  $\Delta(\mathbf{u})$, is the sequence $\{(u_{n+1}-u_{n})(\bmod 2)\}_{n\ge 0}$. For any $k\geq1$, $\Delta^{k}=\Delta(\Delta^{k-1}(\mathbf{u}))$ and $\Delta^{0}(\mathbf{u})=\mathbf{u}$.
 Using Proposition \ref{p1}, we will obtain all the irrationality exponents of the real numbers corresponding to the differences of Sturmian sequences.
\begin{corollary}\label{c2}
For any Sturmian sequence $\mathbf{u}\in\{0,1\}^{\mathbb{N}}$, if their exists an integer $a\geq2$ such that $\mu(\xi_{\mathbf{u},b})=a$ for any $b\geq2$, then for any integer $k\geq1$, $\mu(\xi_{\Delta^{k}(\mathbf{u}),b})=a$ for any integer $b\geq2$.
\end{corollary}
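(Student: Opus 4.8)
The plan is to exhibit $\Delta^{k}(\mathbf{u})$ as a letter-to-letter (sliding block) coding of a window sequence of $\mathbf{u}$, to recognise that window sequence as the $k$-fold iterate of the operation $\mathbf{w}\mapsto\mathbf{w}\times S(\mathbf{w})$ applied to $\mathbf{u}$, and then to transfer the conclusion of Proposition \ref{p1} through the coding. I expect the last of these to be the real obstacle.

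\emph{Step 1: reduction to windows.} Modulo $2$ the difference operator is addition, so the binomial theorem gives $\Delta^{k}(\mathbf{u})_{n}\equiv\sum_{j=0}^{k}\binom{k}{j}u_{n+j}\pmod{2}$; in particular $\Delta^{k}(\mathbf{u})_{n}$ depends only on the length-$(k+1)$ window $(u_{n},u_{n+1},\dots,u_{n+k})$. Writing $W_{m}(\mathbf{u}):=\{(u_{n},\dots,u_{n+m-1})\}_{n\ge0}$ for the length-$m$ window sequence of $\mathbf{u}$ (recoded over a finite alphabet), this reads $\Delta^{k}(\mathbf{u})=\pi_{k}\bigl(W_{k+1}(\mathbf{u})\bigr)$ for an explicit $1$-block code $\pi_{k}$ (the one dictated by the binomial pattern above). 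Moreover, two overlapping length-$m$ windows carry exactly the data of one length-$(m+1)$ window, so up to relabelling alphabets $W_{m+1}(\mathbf{u})=W_{m}(\mathbf{u})\times S\bigl(W_{m}(\mathbf{u})\bigr)$; starting from $\mathbf{u}=W_{1}(\mathbf{u})$ this presents $W_{k+1}(\mathbf{u})$ as $\mathbf{u}$ with $\cdot\times S(\cdot)$ applied $k$ times.

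\emph{Step 2: the window sequences.} Each $W_{m}(\mathbf{u})$ is non-eventually-periodic, since reading first coordinates returns the Sturmian word $\mathbf{u}$. Going along the chain $\mathbf{u}=W_{1}(\mathbf{u})\to W_{2}(\mathbf{u})\to\cdots\to W_{k+1}(\mathbf{u})$, Proposition \ref{p1} governs the very first arrow; the later arrows are not literally covered (the intermediate sequences are no longer Sturmian), but one checks in the same spirit that, with a digit-by-digit encoding of windows, $\xi_{W_{m+1}(\mathbf{u}),b}$ is a $\mathbb{Q}$-affine function of $\xi_{W_{m}(\mathbf{u}),b}$ with non-zero linear coefficient, so the irrationality exponent is unchanged at each step. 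Hence $\mu\bigl(\xi_{W_{k+1}(\mathbf{u}),b}\bigr)=a$ for every integer $b\ge2$. I would also note here that $\Delta^{k}(\mathbf{u})$ is itself not eventually periodic — otherwise, ``integrating'' $k$ times (at each stage $v_{n+1}=v_{n}\oplus w_{n}$ turns an eventually periodic $\{w_{n}\}$ into an eventually periodic $\{v_{n}\}$, at worst doubling the period) would force $\mathbf{u}$ to be eventually periodic — so that $\xi_{\Delta^{k}(\mathbf{u}),b}$ is irrational and $\mu$ makes sense.

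\emph{Step 3: through the coding (the hard part).} It remains to pass from $\mu\bigl(\xi_{W_{k+1}(\mathbf{u}),b}\bigr)=a$ to $\mu\bigl(\xi_{\Delta^{k}(\mathbf{u}),b}\bigr)=a$ across the non-injective $1$-block code $\pi_{k}$; a generic such code can lower the irrationality exponent, so this step must exploit the rigidity of Sturmian words. Concretely, $\mathbf{u}$ has only $k+2$ factors of length $k+1$, so $W_{k+1}(\mathbf{u})$ takes values in a $(k+2)$-letter alphabet and $\Delta^{k}(\mathbf{u})$ has factor complexity at most $n+k+1$; equivalently, realising $\mathbf{u}$ as a Sturmian rotation coding of slope $\theta$ and intercept $\rho$, the bit $\Delta^{k}(\mathbf{u})_{n}$ equals the value at $\{n\theta+\rho\}$ of the indicator of a finite union of intervals whose endpoints lie in $\{j\theta\bmod 1:j\in\mathbb{Z}\}$. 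Thus $\xi_{\Delta^{k}(\mathbf{u}),b}$ belongs to the same family of ``rotation-coded'' base-$b$ numbers of which Proposition \ref{p1} treats a first case, and one finishes by rerunning the continued-fraction (Ostrowski / matrix-product) analysis behind that proposition: its relevant asymptotics are governed only by the continued fraction expansion of $\theta$, hence by $a$, and not by which step function cuts out the digits. (One may assume without loss that $11$ is not a factor of $\mathbf{u}$, after replacing $\mathbf{u}$ by its complement $\bar{\mathbf{u}}$, for which $\xi_{\bar{\mathbf{u}},b}=\tfrac{1}{b-1}-\xi_{\mathbf{u},b}$ and $\Delta^{k}(\bar{\mathbf{u}})=\Delta^{k}(\mathbf{u})$; this keeps the explicit affine identities available for small $k$ and streamlines the bookkeeping.) This yields $\mu\bigl(\xi_{\Delta^{k}(\mathbf{u}),b}\bigr)=a$ for every integer $b\ge2$, which is Corollary \ref{c2}.
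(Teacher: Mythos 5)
Your Step 1 is exactly the paper's starting point: $\Delta^{k}(\mathbf{u})_{n}\equiv\sum_{i=0}^{k}\binom{k}{i}u_{n+i}\pmod 2$ depends only on the window $u_{n}u_{n+1}\cdots u_{n+k}$, and a Sturmian word has exactly $k+2$ factors of length $k+1$. But you then take a detour that you cannot close. The paper uses the count $k+2$ in a much more direct way: there are $k+2$ blocks and $k+2$ affine coefficients, so one interpolates the \emph{composite} map $(u_{n},\dots,u_{n+k})\mapsto\Delta^{k}(\mathbf{u})_{n}$ itself as $\Delta^{k}(\mathbf{u})_{n}=a_{0}u_{n}+a_{1}u_{n+1}+\cdots+a_{k}u_{n+k}+a_{k+1}$ with $a_{i}\in\mathbb{Q}$, exactly as Proposition \ref{p1} does with $3$ blocks against $3$ coefficients. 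Summing against $b^{-n}$ then gives $\xi_{\Delta^{k}(\mathbf{u}),b}=A\,\xi_{\mathbf{u},b}+B$ with $A=\sum_{i}a_{i}b^{i}$ and $A,B\in\mathbb{Q}$; your (correct, and worth keeping) observation that $\Delta^{k}(\mathbf{u})$ is not eventually periodic forces $A\neq0$, and rational affine maps preserve the irrationality exponent. Your worry in Step 3 that the non-injective $1$-block code $\pi_{k}$ could lower the exponent simply dissolves once the composed function is interpolated directly: there is nothing to "pass through."

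As written, Step 3 is a genuine gap, not a proof: you label it the hard part and then resolve it by asserting that one "reruns the continued-fraction (Ostrowski / matrix-product) analysis behind" Proposition \ref{p1} --- but there is no such analysis behind that proposition; its proof is a two-line affine identity, and the continued-fraction/Ostrowski machinery in the paper lives in Theorem \ref{theorem1}, which concerns one specific family of slopes and is not available for an arbitrary Sturmian $\mathbf{u}$. Step 2 also contains an unproved claim: for $m\geq2$ the window sequence $W_{m}(\mathbf{u})$ has $m+2>3$ factors of length $2$, so the three-parameter ansatz of Proposition \ref{p1} is overdetermined and the existence of a digit-by-digit affine identity relating $\xi_{W_{m+1}(\mathbf{u}),b}$ to $\xi_{W_{m}(\mathbf{u}),b}$ depends on the chosen encoding of the alphabet; "one checks in the same spirit" does not establish it. The one point your argument shares with a genuine obligation of the paper's proof (which the paper also leaves implicit) is that the $k+2$ factors of length $k+1$ must be affinely independent in $\mathbb{Q}^{k+1}$ for the interpolation to exist; if you rewrite the proof along the paper's lines, that is the single fact you should isolate and verify.
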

In particular, we have
\begin{corollary}\label{c3}
For any integer $b\geq2$, $\mu(\xi_{\Delta^{k}(\mathbf{\varepsilon^{1}}),b})=1+\frac{1+\sqrt{5}}{2}$.
\end{corollary}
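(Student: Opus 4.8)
The plan is to obtain Corollary \ref{c3} as the special case $\mathbf{u}=\varepsilon^{1}$ of Corollary \ref{c2}, so that no new idea is required beyond assembling two facts already established. First I would record that $\varepsilon^{1}$, the fixed point of $\sigma(0)=01$, $\sigma(1)=0$, is a Sturmian sequence, hence lies in the scope of Corollary \ref{c2} (this is recalled in the introduction). Then I would invoke Corollary \ref{c1}, which gives $\mu(\xi_{\varepsilon^{1},b})=1+\frac{1+\sqrt{5}}{2}$ for \emph{every} integer $b\ge 2$; thus the hypothesis of Corollary \ref{c2} is met with the common value $a=1+\frac{1+\sqrt{5}}{2}$. (That $a$ happens to be irrational rather than an integer is immaterial here: the mechanism behind Proposition \ref{p1}, and hence behind Corollary \ref{c2}, only uses that $\mu(\xi_{\mathbf{u},b})$ is one and the same real number for all $b\ge 2$.) Corollary \ref{c2} then yields $\mu(\xi_{\Delta^{k}(\varepsilon^{1}),b})=1+\frac{1+\sqrt{5}}{2}$ for all $k\ge 1$ and all integers $b\ge 2$, which is exactly Corollary \ref{c3}.

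As a sanity check, and for orientation, I would also verify the case $k=1$ by hand, since it makes clear why the general case needs the machinery. As $\varepsilon^{1}$ avoids the factor $11$, we have $\varepsilon_{n}+\varepsilon_{n+1}\le 1$ for every $n$, so $\Delta(\varepsilon^{1})_{n}=(\varepsilon_{n+1}-\varepsilon_{n})\bmod 2=\varepsilon_{n}+\varepsilon_{n+1}$, and hence
\begin{equation*}
\xi_{\Delta(\varepsilon^{1}),b}=\sum_{n\ge 0}\frac{\varepsilon_{n}+\varepsilon_{n+1}}{b^{n}}=\xi_{\varepsilon^{1},b}+b\bigl(\xi_{\varepsilon^{1},b}-\varepsilon_{0}\bigr)=(b+1)\,\xi_{\varepsilon^{1},b}.
\end{equation*}
Since a nonzero rational multiple of an irrational number has the same irrationality exponent, this already gives $\mu(\xi_{\Delta(\varepsilon^{1}),b})=1+\frac{1+\sqrt{5}}{2}$. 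The trick is special to $k=1$: $\Delta(\varepsilon^{1})$ contains the factor $11$ and is no longer Sturmian, while a one-line computation gives $\Delta^{2}(\varepsilon^{1})_{n}=\varepsilon_{n}\oplus\varepsilon_{n+2}$, which does not coincide with $\varepsilon_{n}+\varepsilon_{n+2}$ (for instance at $n=4$, as $\varepsilon^{1}$ contains $101$), so the computation does not iterate. This is precisely why one wants Proposition \ref{p1}, which transports the exponent through the direct product $\mathbf{u}\times S(\mathbf{u})$ without requiring the intermediate sequences to stay Sturmian.

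The only genuinely verifiable point along the way---and the nearest thing to an obstacle---is that the non-eventual-periodicity hypothesis of Proposition \ref{p1} holds at each stage, i.e. that $\Delta^{j}(\varepsilon^{1})\times S(\Delta^{j}(\varepsilon^{1}))$ is non-eventually periodic for all $j\ge 0$, equivalently that $\Delta^{j}(\varepsilon^{1})$ itself is non-eventually periodic. I would prove this by contradiction. Write $\Delta^{j}(\varepsilon^{1})_{n}=\bigoplus_{i\in T_{j}}\varepsilon_{n+i}$ with $T_{j}=\{i:\binom{j}{i}\text{ is odd}\}$, so $\{0,j\}\subseteq T_{j}$. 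If $\Delta^{j}(\varepsilon^{1})$ had a period $p$, then $\bigoplus_{i\in T_{j}}\delta_{n+i}=0$ for all large $n$, where $\delta_{n}:=\varepsilon_{n}\oplus\varepsilon_{n+p}$; this is a fixed $\mathrm{GF}(2)$-linear recurrence (solve for $\delta_{n+j}$), so $(\delta_{n})$ is eventually periodic with some period $q$, which forces $\varepsilon_{n}\oplus\varepsilon_{n+p}\oplus\varepsilon_{n+q}\oplus\varepsilon_{n+p+q}=0$ for all large $n$, a $\mathrm{GF}(2)$-linear recurrence for $\varepsilon^{1}$ and hence eventual periodicity of $\varepsilon^{1}$---impossible for a Sturmian sequence. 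Granting this, Corollary \ref{c3} carries no difficulty of its own: all the real content sits upstream, in Corollary \ref{c1} (the value of the exponent for the Fibonacci word) and in Proposition \ref{p1} (invariance of the exponent under the operation $\mathbf{u}\mapsto\mathbf{u}\times S(\mathbf{u})$).
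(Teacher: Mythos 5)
Your proof is correct and matches the paper's (implicit) derivation: Corollary \ref{c3} is exactly Corollary \ref{c2} applied to the Sturmian sequence $\varepsilon^{1}$, with the common value $a=1+\frac{1+\sqrt{5}}{2}$ supplied by Corollary \ref{c1}. Your side remarks---that the literal hypothesis ``integer $a\ge 2$'' must be read as ``real $a\ge 2$'' since $1+\frac{1+\sqrt{5}}{2}$ is irrational, and that the sequences $\Delta^{j}(\varepsilon^{1})$ remain non-eventually periodic---are sound and in fact address small points the paper leaves implicit.
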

Corollary \ref{c2} and Corollary \ref{c3} tell us that we can get the irrationality exponents of a large class of morphis numbers. Moreover, most of them are the numbers whose $b$-expansions are not Sturmian sequences.
\begin{example}
It easy to check that $\Delta^{2}(\varepsilon^{1})=01100011011\cdots$. The sequence $\Delta^{2}(\varepsilon^{1})$ is not a sturmian sequence, since both the $'00'$ block and $'11'$ block appear in the sequence. By Corollary \ref{c3}, we have $\mu(\xi_{\Delta^{2}(\varepsilon^{1}),b})=1+\frac{1+\sqrt{5}}{2}$.
\end{example}

This paper is organized as follows. In Section 2, we study the structure of the sequence $\varepsilon$. In
Section 3, we prove Theorem \ref{theorem1} and Proposition \ref{p1}.
\section{Structure of the sequence $\varepsilon$}

In this section, we will study the sequence $\varepsilon$.
Set $U_{n}=\sigma^{n}(0)$, where $\sigma^{n}$ denote the $n$-th iteration of $\sigma$ by $\sigma^{n}=\sigma(\sigma^{n-1}),n\geq1$. $U_{n}^{*}$ is denoted the word obtained by interchange the last two letters of $U_{n}$. Define  $f_{n}:=|U_{n}|~(n\geq0)$, where $|W|$ denotes the length of the finite word $W$. Then $(f_{n})_{n\geq-2}$ can be defined by a recursive formula as follow:
\begin{equation}\label{formula2}
f_{n+2}=k f_{n+1}+f_{n},(n\geq-2),f_{-2}=1-k,f_{-1}=1.
\end{equation}

\begin{lemma}\label{lemma1}
For any integer $n\geq2$, we have following statements:

(1) $U_{n-1}U_{n}=U_{n}U_{n-1}^{*}$,

(2) $U_{n+2}=U_{n}(U_{n}^{k}U_{n-1}^{*})^{k}$.

\end{lemma}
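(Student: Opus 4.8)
The plan is to prove both identities simultaneously by induction on $n$, exploiting the recursive definition of the morphism $\sigma$ via $\sigma(0)=0^k1$, $\sigma(1)=0$. First I would record the base words: $U_0=0$, $U_1=\sigma(0)=0^k1$, $U_2=\sigma(0^k1)=(0^k1)^k0$, and note that $U_n^*$ is obtained from $U_n$ by swapping its last two letters, so in particular $U_1^*=10^{k-1}\cdot$ (careful bookkeeping needed when $k=1$, where $U_1^*=10$ and $U_1=01$). The key observation driving everything is that $\sigma$ sends the pair $(U_{n-1},U_n)$ to $(U_n,U_{n+1})$, and — crucially — that $\sigma$ interacts predictably with the "starred" operation: since the last letter of $U_n$ is always $0$ for $n\geq 1$ and the second-to-last is determined by the structure, one checks that $\sigma(U_n^*)$ relates to $U_{n+1}^*$ in a controlled way (this is the lemma-behind-the-lemma that I would isolate first).

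For part (1), the base case $n=2$ is the direct verification $U_1U_2 = U_2U_1^*$, i.e. $(0^k1)\cdot(0^k1)^k0 = (0^k1)^k0\cdot U_1^*$; since both sides have the same length $f_1+f_2$, it suffices to check they agree, which reduces to identifying the last two letters, i.e. to computing $U_1^*$ correctly. For the inductive step, assuming $U_{n-1}U_n=U_nU_{n-1}^*$, I apply $\sigma$ to both sides to get $U_nU_{n+1}=U_{n+1}\sigma(U_{n-1}^*)$, and then I must show $\sigma(U_{n-1}^*)=U_n^*$; this is exactly where the preliminary observation about $\sigma$ and the star operation is used. The relation $U_{n-1}^* = (\text{the word }U_{n-1}\text{ with last two letters }xy\text{ replaced by }yx)$, combined with $\sigma(x)\sigma(y)$ versus $\sigma(y)\sigma(x)$ for the relevant letters $x,y\in\{0,1\}$, gives the claim after checking that the "interior" of $U_{n-1}$ maps identically and only the tail is affected.

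For part (2), I would proceed similarly by induction, using part (1) as a tool. The base case $n=2$ asks for $U_4=U_2(U_2^kU_1^*)^k$, which follows by iterating $\sigma$ on $U_1=0^k1$ and grouping terms, or by combining $U_4=\sigma^2(U_2)$ with the part-(1) identities. For the inductive step, apply $\sigma$ to $U_{n+2}=U_n(U_n^kU_{n-1}^*)^k$ to obtain $U_{n+3}=U_{n+1}(U_{n+1}^k\,\sigma(U_{n-1}^*))^k = U_{n+1}(U_{n+1}^kU_n^*)^k$, again invoking $\sigma(U_{n-1}^*)=U_n^*$ from the part-(1) analysis. The main obstacle I anticipate is getting the star operation's behavior under $\sigma$ exactly right, including the degenerate $k=1$ case where $|U_1|=2$ and the "last two letters" are the whole word; I would handle this by treating $k=1$ as a separate base-case check if the uniform argument does not cleanly cover it. A length check using $(f_n)$ from (\ref{formula2}) — namely $f_{n+2}=f_n+k(kf_n+f_{n-1})=kf_{n+1}+f_n$ — serves as a useful consistency verification throughout but is not itself the content of the proof.
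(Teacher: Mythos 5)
Your proposal is correct in substance but follows a genuinely different route from the paper. The paper proves (1) by a purely word-combinatorial strong induction inside the free monoid: it expands $U_{m}U_{m+1}$ using the recursion $U_{m+1}=U_{m}^{k}U_{m-1}$, commutes powers, invokes the induction hypothesis one level down (namely $U_{m-2}U_{m-1}=U_{m-1}U_{m-2}^{*}$), and reassembles using the fact that starring only touches the last two letters, so that $U_{m}^{*}=U_{m-1}^{k}U_{m-2}^{*}$; it then obtains (2) immediately from (1) and $U_{n+2}=U_{n+1}^{k}U_{n}$ (the point being that $XU_{n}=U_{n}Y$ with $X=U_{n}^{k}U_{n-1}$ and $Y=U_{n}^{k}U_{n-1}^{*}$ forces $X^{k}U_{n}=U_{n}Y^{k}$). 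You instead apply $\sigma$ to both sides of the identities and reduce everything to the compatibility $\sigma(U_{n}^{*})=U_{n+1}^{*}$, which you rightly identify as the lemma-behind-the-lemma; this sub-lemma is indeed true, because for $n\geq1$ the word $U_{n}$ ends in two distinct letters and $\sigma(0)\sigma(1)=0^{k}10$ versus $\sigma(1)\sigma(0)=0^{k+1}1$ differ exactly by a swap of the last two letters, so the star operation commutes with $\sigma$ on these words. Both arguments are valid: yours isolates a cleaner structural fact about $\sigma$ and the star, at the cost of having to prove it, while the paper's manipulation needs no statement about how $\sigma$ acts on starred words. One small slip to fix when writing this up: $U_{1}^{*}$ is $0^{k-1}10$, not $10^{k-1}$; this does not affect the architecture of your argument, and the base case $U_{1}U_{2}=U_{2}U_{1}^{*}$ then checks out as both sides equal $(0^{k}1)^{k+1}0$.
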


\begin{proof} (1) By induction on $n$, the case $n=2$ is trivial.
Assume that the result is true for all $n\leq m$; we need to prove it for $n=m+1$.

Hence, we have
\begin{eqnarray*}
U_{m}U_{m+1}&=&U_{m}U_{m}^{k}U_{m-1}=U_{m}^{k}U_{m}U_{m-1}\\&=&U_{m}^{k}U_{m-1}^{k}U_{m-2}U_{m-1}
=U_{m}^{k}U_{m-1}U_{m-1}^{k-1}U_{m-2}U_{m-1}\\&=&U_{m+1}U_{m-1}^{k-1}U_{m-1}U_{m-2}^{*}=U_{m+1}U_{m-1}^{k}U_{m-2}^{*}=U_{m+1}U_{m}^{*}.
\end{eqnarray*}
which completes the proof.

(2) This follows immediately from (1) and $U_{n+2}=U_{n+1}^{k}U_{n}$ for $n\geq2$.
\end{proof}

\begin{corollary}\label{corollary1}
$~\forall ~n\geq 2$, $\varepsilon_{i}=\varepsilon_{i+f_{n}},$ for $0\leq i\leq f_{n+1}-3$.
\end{corollary}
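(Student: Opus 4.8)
The plan is to deduce Corollary~\ref{corollary1} directly from part~(2) of Lemma~\ref{lemma1}. Fix $n \geq 2$. By Lemma~\ref{lemma1}(2) we have $U_{n+2} = U_n (U_n^k U_{n-1}^*)^k$; in particular $U_{n+2}$ begins with $U_n$, so the first $f_n$ letters of $U_{n+2}$ coincide with the first $f_n$ letters of $U_n$. But we also know $U_{n+2} = U_{n+1}^k U_n$, so $U_{n+2}$ begins with $U_{n+1}$; hence $U_{n+1}$ is a prefix of $U_{n+2}$ as well. The two expressions together say that $U_{n+2}$, which has length $f_{n+2} \geq f_{n+1}$, starts both with $U_{n+1}$ (the word $\varepsilon_0 \cdots \varepsilon_{f_{n+1}-1}$, since $\varepsilon$ is the fixed point of $\sigma$ and each $U_m$ is a prefix of $\varepsilon$) and with $U_n$ followed by a copy of $U_n^k$, hence followed by a second copy of $U_n$ beginning at position $f_n$. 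Reading off positions $f_n, f_n+1, \ldots$ from this second copy of $U_n$ gives $\varepsilon_{i+f_n} = \varepsilon_i$ for $0 \leq i \leq f_n - 1$.

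To get the sharper range $0 \leq i \leq f_{n+1}-3$, I would instead compare $U_{n+1}$ with the shift of $U_{n+2}$ by $f_n$, using the more precise information in Lemma~\ref{lemma1}(2). From $U_{n+2}=U_n(U_n^kU_{n-1}^*)^k$ one sees that the suffix of $U_{n+2}$ of length $f_{n+2}-f_n$ starting at position $f_n$ is $(U_n^kU_{n-1}^*)^k$; when $k\geq 2$ this begins with $U_n U_n$ and in general with $U_n^k U_{n-1}^*$, and the first $f_{n+1}-2$ of these letters agree with the first $f_{n+1}-2$ letters of $U_{n+2}$ itself. Concretely, $U_{n+1}=U_n^kU_{n-1}$ and the block $U_{n-1}^*$ differs from $U_{n-1}$ only in its last two letters, i.e.\ in positions $f_{n-1}-2$ and $f_{n-1}-1$ of $U_{n-1}$; hence $U_n^kU_{n-1}^*$ agrees with $U_{n+1}=U_n^kU_{n-1}$ in all positions except the last two, that is, in positions $0$ through $f_{n+1}-3$. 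Since $U_{n+2}$ starts with $U_{n+1}=U_n^kU_{n-1}$ and the word occupying positions $f_n,\dots,f_n+f_{n+1}-1$ of $U_{n+2}$ is $U_n^kU_{n-1}^*$ (using $f_{n+1}=kf_n+f_{n-1}$, and noting $f_{n+2}-f_n = kf_{n+1}+f_{n-1}\ge f_{n+1}$ so this block fits), comparing these two occurrences position by position yields $\varepsilon_i=\varepsilon_{i+f_n}$ for $0\leq i\leq f_{n+1}-3$.

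The one point requiring a little care — and the main obstacle — is the bookkeeping with the $U_{n-1}^*$ blocks: I must check that the very first deviation between $\varepsilon$ read from the start and $\varepsilon$ read from position $f_n$ occurs exactly at the two letters where $U_{n-1}$ has been replaced by $U_{n-1}^*$, and that no earlier $U_n$-blocks introduce a discrepancy. This is handled by writing $U_{n+1} = U_n^k U_{n-1}$ and the shifted block as $U_n^k U_{n-1}^*$, so that the first $k f_n$ letters (the $U_n^k$ part) agree trivially and the first discrepancy can only arise inside the final $U_{n-1}$ versus $U_{n-1}^*$, namely at positions $k f_n + f_{n-1} - 2 = f_{n+1}-2$ and $f_{n+1}-1$. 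Hence agreement is guaranteed precisely on the range $0 \leq i \leq f_{n+1}-3$, and also one should note $f_{n+1}-3\ge f_n-1$ fails in small cases only, but since $k,n\ge$ the stated bounds the inequality $f_{n+2}-f_n\ge f_{n+1}$ needed for the block to fit always holds, completing the argument.
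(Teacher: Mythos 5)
Your argument is correct: the paper states Corollary \ref{corollary1} without proof, and your deduction is exactly the intended one from Lemma \ref{lemma1}(2), comparing the prefix $U_{n+1}=U_n^kU_{n-1}$ of $U_{n+2}$ with the factor $U_n^kU_{n-1}^*$ occupying positions $f_n,\dots,f_n+f_{n+1}-1$, which differ only in their last two letters. The closing remark about $f_{n+1}-3\geq f_n-1$ is irrelevant to the argument (the only containment needed is $f_n+f_{n+1}\leq f_{n+2}$, which you verify), but it does no harm.
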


Now we turn to introduce another numeration system (more details in \cite{AS03}), based on the numbers defined in (\ref{formula2}).
We state the following theorem without proof.

\begin{theorem}\label{th1}
Every integer $n\in\mathbb{N}$ can be uniquely expressed as $n=\sum_{i\geq0}\tau_{i}(n)f_{i}$, with $\tau_{i}(n)\in{\{0,1,\cdots,k\}}$, and if $\tau_{i+1}(n)=k$, then $\tau_{i}(n)=0$, for all $ i\geq0$.
\end{theorem}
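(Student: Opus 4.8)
The plan is to prove Theorem~\ref{th1} by the classical ``greedy'' method, with a single auxiliary estimate doing most of the work. First I would record the basic properties of $(f_n)_{n\ge 0}$ that follow from (\ref{formula2}): one has $f_0=1$, $f_1=k+1$, and since $k\ge 1$ and every $f_n$ with $n\ge 0$ is a positive integer, the recursion $f_{n+2}=kf_{n+1}+f_n$ shows that $(f_n)_{n\ge 0}$ is strictly increasing and satisfies $kf_n<f_{n+1}\le (k+1)f_n$ for all $n\ge 0$. The auxiliary terms $f_{-1}=1$ and $f_{-2}=1-k$ play no role in the representation itself, which uses only the $f_i$ with $i\ge 0$, so their unusual values cause no trouble. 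Call a (finite or eventually-zero) sequence $(c_i)$ with all $c_i\in\{0,1,\dots,k\}$ \emph{admissible} if $c_{i+1}=k$ implies $c_i=0$ for every $i$; note that a truncation of an admissible sequence is again admissible.

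The key lemma is that every admissible block $(c_0,\dots,c_{j-1})$ satisfies $\sum_{i=0}^{j-1}c_if_i\le f_j-1$. I would prove this by induction on $j$, the cases $j=0$ and $j=1$ being immediate from $f_0=1$ and $f_1=k+1$. For the inductive step, distinguish two cases according to the top digit $c_{j-1}$. If $c_{j-1}\le k-1$, then applying the inductive hypothesis to $(c_0,\dots,c_{j-2})$ gives $\sum_{i=0}^{j-1}c_if_i\le (k-1)f_{j-1}+(f_{j-1}-1)=kf_{j-1}-1\le f_j-1$. If $c_{j-1}=k$, then admissibility forces $c_{j-2}=0$, and applying the inductive hypothesis to $(c_0,\dots,c_{j-3})$ gives $\sum_{i=0}^{j-1}c_if_i\le kf_{j-1}+(f_{j-2}-1)=f_j-1$. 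This proves the lemma.

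Existence of a representation of a given $n\ge 1$ comes from the greedy algorithm: let $j$ be the largest index with $f_j\le n$, set $\tau_j(n)=\lfloor n/f_j\rfloor$, replace $n$ by $n-\tau_j(n)f_j<f_j$, and iterate until $0$ is reached, all remaining digits being $0$. One checks $\tau_j(n)\le k$ from $n<f_{j+1}\le (k+1)f_j$; the indices used strictly decrease because each new remainder is smaller than the power just subtracted; and the admissibility condition holds because whenever a greedy step outputs the digit $k$ at level $i+1$, the current remainder $m$ satisfies $kf_{i+1}\le m<f_{i+2}$, so the remainder passed on is $m-kf_{i+1}\le f_{i+2}-kf_{i+1}-1=f_i-1<f_i$, whence $f_i$ is never used afterward and $\tau_i(n)=0$. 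For uniqueness, suppose $\sum_i\tau_if_i=\sum_i\tau'_if_i$ with both sequences admissible but not identical; let $j$ be the largest index with $\tau_j\ne\tau'_j$, say $\tau_j>\tau'_j$. Since the sequences agree above $j$, we get $(\tau_j-\tau'_j)f_j=\sum_{i<j}(\tau'_i-\tau_i)f_i\le\sum_{i<j}\tau'_if_i\le f_j-1$ by the lemma, contradicting $\tau_j-\tau'_j\ge 1$.

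The only point requiring attention is bookkeeping at the small indices, making sure the base cases of the lemma and the termination of the greedy algorithm mesh correctly; but as noted this is routine because the numeration involves only $f_i$ with $i\ge 0$ and $f_0=1$. The substantive ingredient is the admissibility bound $\sum_{i=0}^{j-1}c_if_i\le f_j-1$, which simultaneously explains why the restriction ``$\tau_{i+1}(n)=k\Rightarrow\tau_i(n)=0$'' is exactly the right normalization and why, under that restriction, the expansion is unique.
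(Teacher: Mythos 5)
Your proof is correct. Note, however, that the paper deliberately states Theorem~\ref{th1} \emph{without} proof, pointing to the general theory of numeration systems in Allouche--Shallit \cite{AS03}, so there is no in-paper argument to compare against; what you have written is the standard greedy/Zeckendorf-type argument that the cited reference uses. The substantive content is exactly where you locate it: the bound $\sum_{i=0}^{j-1}c_if_i\le f_j-1$ for admissible digit blocks (proved by splitting on whether the top digit equals $k$), which yields uniqueness by comparing two representations at their highest differing index, while the greedy algorithm together with the identity $f_{i+2}-kf_{i+1}=f_i$ yields existence and the carry rule $\tau_{i+1}(n)=k\Rightarrow\tau_i(n)=0$. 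Your bookkeeping at the small indices ($f_0=1$, $f_1=k+1$, and the irrelevance of $f_{-1},f_{-2}$) is also handled correctly, so the argument is a complete and valid substitute for the omitted proof.
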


The expression is called \emph{regular expression} based on the integer sequence $(f_{n})_{n\geq0}$ defined in (\ref{formula2}).
Let $m,n\in\mathbb{N}$, $j$ be a positive integer, we define
\begin{eqnarray*}
m \equiv_{j} n ,
\end{eqnarray*}
if  $\tau_{i}(m)=\tau_{i}(n)$ , for all $i<j$.

\begin{lemma}\label{lemma2}
For any $n\in\mathbb{N}$, we have

(1) $\varepsilon_{n}=0$ if and only if $\tau_{0}(n)\in\{0,1,\cdots,k-1\}$,

(2) $\varepsilon_{n}=1$ if and only if $\tau_{0}(n)=k $.
\end{lemma}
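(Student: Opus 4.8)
The plan is to prove both directions by a single induction, exploiting the explicit shape of the regular expression from Theorem \ref{th1} together with the self-similar decomposition $U_{n+2}=U_{n}(U_{n}^{k}U_{n-1}^{*})^{k}$ from Lemma \ref{lemma1}(2). The key observation is that the condition ``$\tau_{0}(n)=k$'' is, by Theorem \ref{th1}, extremely restrictive: if $\tau_{0}(n)=k$ then necessarily $\tau_{1}(n)=0$, and since $f_{0}=|U_{0}|=1$, such an $n$ is of the form $n=k+\sum_{i\geq 2}\tau_{i}(n)f_{i}$. Combined with Lemma \ref{lemma2}(1)–(2) being a dichotomy ($\tau_{0}(n)$ is always in $\{0,\dots,k\}$), it suffices to prove just the ``if'' direction of one of the two statements, say (2): $\tau_{0}(n)=k \Rightarrow \varepsilon_{n}=1$, and $\tau_{0}(n)\in\{0,\dots,k-1\}\Rightarrow\varepsilon_{n}=0$; the ``only if'' halves then follow immediately because the two hypotheses exhaust all cases and the two conclusions are mutually exclusive.

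First I would record the base cases by direct inspection: since $U_{1}=\sigma(0)=0^{k}1$, the prefix $\varepsilon_{0}\cdots\varepsilon_{k}$ equals $0^{k}1$, and the regular expressions of $0,1,\dots,k-1$ are just $\tau_{0}=0,1,\dots,k-1$ while $k=1\cdot f_{1}+0\cdot f_{0}$ has $\tau_{0}(k)=0$... wait — here one must be careful, since $f_{1}=|U_{1}|=k+1$, not $k$; so the integer $k$ has regular expression $\tau_{0}(k)=k$ (all digits in position $0$), matching $\varepsilon_{k}=1$, and the integers $0,\dots,k-1$ have $\tau_{0}=0,\dots,k-1$, matching $\varepsilon_{0}=\cdots=\varepsilon_{k-1}=0$. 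For the inductive step I would write $n$ in its regular expression and let $j$ be the largest index with $\tau_{j}(n)\neq 0$, so $f_{j}\leq n<f_{j+1}$. Using Lemma \ref{lemma1}(2) to peel off a copy of $U_{j-1}$ or a block $U_{j-1}^{k}U_{j-2}^{*}$ from the front of $U_{j+1}$, one reduces the computation of $\varepsilon_{n}$ to the computation of $\varepsilon_{n'}$ for a strictly smaller $n'$ obtained by subtracting the appropriate multiple of $f_{j-1}$; crucially, because the blocks being concatenated are $U_{j-1}$ and $U_{j-1}^{*}$ (which differ only in their last two letters), and Corollary \ref{corollary1} controls exactly where those differences sit, the digit $\tau_{0}$ is preserved under this reduction except possibly at the two boundary positions $n'=f_{j-1}-1$ and $n'=f_{j-1}-2$, which must be handled separately.

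The main obstacle will be precisely this boundary bookkeeping: the words $U_{n-1}$ and $U_{n-1}^{*}$ agree everywhere except at their last two letters, so whenever the index $n$ falls near a junction between consecutive blocks in the decomposition $U_{j+1}=U_{j-1}(U_{j-1}^{k}U_{j-2}^{*})^{k}$, one has to verify by hand that the letter $\varepsilon_{n}$ read off from the starred block still matches what the regular-expression dichotomy predicts. I expect that at such a boundary, subtracting $f_{j-1}$ forces a carry in the regular expression (turning some $\tau_{i}=k$-then-$0$ pattern over), and the content of the lemma is exactly that this carry does not change whether $\tau_{0}$ equals $k$ or not — this is where the constraint ``if $\tau_{i+1}(n)=k$ then $\tau_{i}(n)=0$'' from Theorem \ref{th1} does the work, since it prevents a carry from ever propagating all the way down to position $0$ in a way that would flip the parity of the last letter. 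Once the boundary cases are checked, the induction closes and both (1) and (2), in both directions, follow.
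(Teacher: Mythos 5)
Your overall strategy --- induction on the scale of $n$, using the self-similar structure of $\varepsilon$ to reduce $\varepsilon_{n}$ to $\varepsilon_{n'}$ for a smaller index while tracking $\tau_{0}$ --- is the same as the paper's, and both your reduction to the ``if'' directions and your base case (after the self-correction about $f_{1}=k+1$) are correct. But there are two problems. First, your ``key observation'' misreads Theorem \ref{th1}: the constraint there is $\tau_{1}(n)=k\Rightarrow\tau_{0}(n)=0$, not the converse, so $\tau_{0}(n)=k$ does \emph{not} force $\tau_{1}(n)=0$; e.g.\ for $k=2$ the integer $5=2f_{0}+f_{1}$ is regular with $\tau_{0}(5)=k$ and $\tau_{1}(5)=1$. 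Second, and more seriously, the step you defer as ``boundary bookkeeping'' is the entire difficulty of your route, and your expectation of how it resolves is not right. Working from $U_{j+1}=U_{j-1}(U_{j-1}^{k}U_{j-2}^{*})^{k}$, the reduction is $n\mapsto r=(n-f_{j-1})\bmod f_{j}$; when the low part $\ell$ of $n$ is smaller than $f_{j-1}$ this yields $r=(k-1)f_{j-1}+f_{j-2}+\ell$, in which carries (via $(k+1)f_{m}=f_{m+1}+(k-1)f_{m-1}+f_{m-2}$) can cascade downward all the way to position $0$ --- Theorem \ref{th1} does not block this. For instance with $k=1$, $n=33=f_{6}+f_{4}+f_{2}+f_{0}$ reduces to $r=20=f_{6}-1$, and $\tau_{0}(20)=0\neq 1=\tau_{0}(33)$. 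The saving grace is that such full cascades land exactly on the boundary positions $f_{j}-2$, $f_{j}-1$, where $U_{j}^{*}$ also swaps letters, and one must check that the digit change and the letter swap always compensate each other. That verification is essentially the content of the paper's Lemma \ref{lemma4} (a full page of case analysis), not something that closes automatically. You would also need extended base cases up to $f_{4}$, since Lemma \ref{lemma1}(2) only decomposes $U_{j+1}$ for $j\geq 3$.

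The paper avoids all of this by using Corollary \ref{corollary1} rather than Lemma \ref{lemma1}(2): for $f_{n}\leq m<f_{n+1}$ one has $m-f_{n}\leq f_{n+1}-3$, so $\varepsilon_{m}=\varepsilon_{m-f_{n}}$ with no boundary exceptions, and $m$ is obtained from $m-f_{n}$ by adding a new top digit at position $n$ with no carry (Lemma \ref{lemma3}), so $\tau_{0}$ is trivially preserved. I would recommend switching your reduction to subtract the top term $f_{n}$ rather than $f_{j-1}$; as written, the proposal has a genuine gap at its central step.
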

\begin{proof}It's only need to prove that $\varepsilon_{n}=0$ if and only if $\tau_{0}(n)\in\{0,1,\cdots,k-1\}$ by induction on $n$. The other statement has the same proof.

It is true for $n=0,1,\cdots,k$, since $\varepsilon=\varepsilon_{0}\varepsilon_{1}\varepsilon_{2}\cdots=0^{k}1\cdots$.
Assume that it's true for $m<f_{n}$ for some $n\geq1$, we need prove it's true for $f_{n}\leq m<f_{n+1}=k f_{n}+f_{n-1}$.

If $f_{n}\leq m<f_{n}+f_{n-1}$, then we have $ m-f_{n}\leq f_{n-1}$. Let $m-f_{n}=\sum_{i=0}^{n-2}m_{i}f_{i}$ be a regular expression. Hence, $m=\sum_{i=0}^{n-2}m_{i}f_{i}+f_{n}$ is the regular expression of $m$. By Corollary \ref{corollary1}, we have
\begin{eqnarray*}
\varepsilon_{m}=0&\Leftrightarrow&\varepsilon_{m-f_{n}}=0\Leftrightarrow \tau_{0}(m-f_{n})\in\{0,1,\cdots,k-1\}\\&\Leftrightarrow& m_{0}\in\{0,1,\cdots,k-1\}\Leftrightarrow \tau_{0}(m)\in\{0,1,\cdots,k-1\}.
\end{eqnarray*}

Continue this process, the result is true for $t f_{n}+f_{n-1}\leq m<(t+1)f_{n}+f_{n-1} (t=1,2,\cdots,k-1)$, which ends the proof.
\end{proof}

\begin{lemma}\label{lemma3}
Let $n=\Sigma_{i\geq0}n_{i}f_{i}$ with $n_{i}\in\{0,1,\cdots,k\}$, Assume that for $0\leq i \leq t$ if $n_{i+1}=k$, then $n_{i}=0$.
Then we have $n_{i}=\tau_{i}(n)$ for $0\leq i \leq t$.
\end{lemma}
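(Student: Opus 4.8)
The statement asserts that a representation $n=\sum_{i\geq 0}n_if_i$ with digits in $\{0,1,\dots,k\}$ which satisfies the ``Zeckendorf-type'' carry condition $n_{i+1}=k\Rightarrow n_i=0$ only on the initial block $0\le i\le t$ must agree with the \emph{canonical} regular expression of $n$ on that initial block, i.e. $n_i=\tau_i(n)$ for $0\le i\le t$. The plan is to reduce everything to the uniqueness half of Theorem \ref{th1} by truncating the sum and running an induction on $t$.

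\emph{Setup and base case.} Write $n=L+f_{t+1}H$ where $L=\sum_{i=0}^{t}n_if_i$ is the ``low part'' and $f_{t+1}H=\sum_{i\ge t+1}n_if_i$ the ``high part''; here I will use the inequality $\sum_{i=0}^{t}n_if_i$ can be bounded using the recursion \eqref{formula2} and the hypothesis that the carry condition holds on $0\le i\le t$. The key elementary estimate is that any string $n_0\cdots n_t$ of digits in $\{0,\dots,k\}$ obeying $n_{i+1}=k\Rightarrow n_i=0$ satisfies $\sum_{i=0}^{t}n_if_i<f_{t+1}$; this is proved by a short induction on $t$ using $f_{t+1}=kf_t+f_{t-1}$ and splitting on whether $n_t=k$ (forcing $n_{t-1}=0$) or $n_t\le k-1$. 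Consequently $L<f_{t+1}$, so $L$ has its own regular expression supported on indices $\le t$, and the concatenation of that expression with the (arbitrary, but in particular some regular) expression of $H$ shifted up by $t+1$ positions gives \emph{a} regular expression of $n$. For $t=0$: reducing $n$ modulo $f_1$ we get $\tau_0(n)\equiv n\pmod{f_1}$ in the appropriate sense, and since $0\le n_0\le k=f_1-1$ (note $f_1=k$, hence actually $n_0\le k$ needs the carry condition with $n_1$; handle the edge case $n_1=k$ separately where $n_0=0$), one reads off $n_0=\tau_0(n)$ directly from Lemma \ref{lemma2} combined with the value of $n$ in a suitable residue class.

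\emph{Inductive step.} Suppose the claim holds for $t-1$. Given $n$ with the carry condition valid on $0\le i\le t$, the induction hypothesis already gives $n_i=\tau_i(n)$ for $0\le i\le t-1$. It remains to pin down $n_t$. Consider $n':=n-\sum_{i=0}^{t-1}\tau_i(n)f_i=n-\sum_{i=0}^{t-1}n_if_i=\sum_{i\ge t}n_if_i$. On one hand, by definition of the regular expression of $n$, this equals $\sum_{i\ge t}\tau_i(n)f_i$, which by Theorem \ref{th1} is a regular expression, so its coefficient of $f_t$ is $\tau_t(n)$ and it lies in $[\tau_t(n)f_t,\,(\tau_t(n)+1)f_t)$ after quotienting by the contribution of higher terms; more precisely $\lfloor n'/f_t\rfloor$ (adjusted for the $f_{t-1}$-overflow, which is suppressed because $\tau_{i}$ below $t$ were subtracted exactly) determines $\tau_t(n)$. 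On the other hand $n'=\sum_{i\ge t}n_if_i=n_tf_t+\sum_{i\ge t+1}n_if_i$, and the carry condition at $i=t$, namely $n_{t+1}=k\Rightarrow n_t=0$, is exactly what is needed to guarantee $\sum_{i\ge t+1}n_if_i$ does not push the $f_t$-coefficient out of range: the tail starting at $f_{t+1}$ contributes a multiple of $f_{t+1}=kf_t+f_{t-1}$, and writing it out one checks $n_t$ is forced to equal the same value $\tau_t(n)$ read off from $n'$. This is where the carry condition on the single extra index $i=t$ is used and nowhere else — which matches the statement precisely, since no condition is assumed at $i=t+1$.

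\emph{Main obstacle.} The delicate point is the bookkeeping with the $f_{t-1}$-term in the recursion when isolating the $f_t$-coefficient: unlike base-$b$ expansions, here $f_{t+1}=kf_t+f_{t-1}$ is not a clean multiple of $f_t$, so ``the coefficient of $f_t$'' is only well-defined once one knows the lower digits, and a naive floor-division argument can be off by one. The clean way around this is to not divide at all but to argue by the uniqueness in Theorem \ref{th1}: form the honest regular expression of $n$, observe (via the $\sum_{i=0}^{t}n_if_i<f_{t+1}$ bound) that the given expression, \emph{restricted to indices $\le t$}, together with a regular expression of the high part, is itself a legitimate regular expression of $n$; uniqueness then forces agreement on all indices, in particular on $0\le i\le t$. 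Making the ``restriction is still regular'' step precise — i.e. that the boundary between low and high parts at index $t/t+1$ does not create an illegal $(k,\text{nonzero})$ adjacency, which is guaranteed precisely because $\sum_{i=0}^t n_i f_i<f_{t+1}$ so the high part can absorb any overflow — is the one spot that needs care, but it is short.
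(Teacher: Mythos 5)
Your overall strategy --- exhibit a regular expression of $n$ that agrees with $n_0,\dots,n_t$ on the low indices and then invoke the uniqueness in Theorem \ref{th1} --- is sound in outline, and your preliminary bound $\sum_{i=0}^{t}n_if_i<f_{t+1}$ is true and correctly proved. But the step you defer to the end is not a small technicality; it is the entire content of the lemma. To run the uniqueness argument you must know that the regular expression of the high part $H'=\sum_{i\ge t+1}n_if_i$ (whose digits are subject to \emph{no} regularity hypothesis above index $t$) is still supported on indices $\ge t+1$, and moreover that its digit at $t+1$ does not increase to $k$ in the process when $n_t\neq 0$. Neither fact follows from $L<f_{t+1}$: ``the high part can absorb any overflow'' is not an argument, because the issue is not whether $L$ overflows upward but whether regularizing $H'$ leaks mass \emph{downward} (note $kf_{t+1}=f_{t+2}-f_{t}$, so a rewriting could a priori produce a term at index $t$). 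Your inductive-step paragraph has the same problem in a different guise, and you concede as much (``a naive floor-division argument can be off by one'', ``writing it out one checks'').

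The paper supplies exactly the missing idea: an explicit carry rule. Take the largest index $i_0$ at which the expression is irregular ($n_{i_0+1}=k$ but $n_{i_0}\neq0$); by maximality of $i_0$ the digits sitting between consecutive $k$'s in the offending block vanish, so the telescoping identity $f_{i_0}+kf_{i_0+1}+kf_{i_0+3}+\cdots+kf_{j_0}=f_{j_0+1}$ lets one replace the block by $(n_{i_0}-1)f_{i_0}+(n_{j_0+1}+1)f_{j_0+1}$, which strictly decreases the top violation index while leaving every digit below $i_0$ untouched. Since all violations live at indices $>t$, iterating reaches the regular expression without ever changing $n_0,\dots,n_t$, which is the assertion. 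Once you prove this rewriting step, the detour through $L$, $H'$ and uniqueness becomes unnecessary; without it, the proof is incomplete. So as written the proposal has a genuine gap, and closing it leads you back to essentially the paper's argument.
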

\begin{proof} If there exists $i\in\mathbb{N}$ such that $n_{i+1}=k$, but $n_{i}\neq 0$. Let $i_{0}=\max\{i\geq0:n_{i+1}=k, n_{i}\neq 0\}$. Let $j_{0}=\max\{j\geq0:n_{i_{0}+1}=n_{i_{0}+3}=\cdots = n_{j}=k\}$. Clearly, $j_{0}\geq i_{0}+1 $,
then we have
\begin{eqnarray*}
n&=&\sum_{i=0}^{i_{0}-1}n_{i}f_{i}+\sum_{i=i_{0}}^{j_{0}}n_{i}f_{i}+\sum_{i=j_{0}+1}^{\infty}n_{i}f_{i}
\\&=&\sum_{i=0}^{i_{0}-1}n_{i}f_{i}+(n_{i_{0}}-1)f_{i_{0}}+(n_{j_{0}+1}+1)f_{j_{0}+1}+\sum_{i=j_{0}+2}^{\infty}n_{i}f_{i}
\\&=&\sum_{i\geq0}n_{i}^{\prime}f_{i}.
\end{eqnarray*}

The new expression is either regular or has a smaller maximum index $i$ with $n_{i+1}=k$, but $n_{i}\neq 0$.
and unchanged at the index less than $t$. By continuing this procedure, we finally get the regular expression of $n$, which does not change the original expression at the indices less than $t$. Hence, we have $n_{i}=\tau_{i}(n)$ for $0\leq i \leq t$.
\end{proof}

\begin{lemma}\label{lemma4}
$\forall ~i \in \mathbb{N}$, $n\geq0$, $\varepsilon_{i+f_{n}}\neq\varepsilon_{i}$
if and only if $i\equiv_{n+1}f_{n+1}-2$ or $i\equiv_{n+1}f_{n+1}-1$.
Moreover, $$\varepsilon_{i+f_{n}}-\varepsilon_{i}=\left\{\begin{array}{ll}(-1)^{n} & \textrm{if} \ i\equiv_{n+1}f_{n+1}-2, \\ (-1)^{n+1} & \textrm{if}\  i \equiv_{n+1}f_{n+1}-1.\end{array}\right.$$

\end{lemma}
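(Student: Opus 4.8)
The plan is to prove Lemma \ref{lemma4} by induction on $n$, using the self-similarity encoded in Corollary \ref{corollary1} together with the regular-expression description of positions given by Theorem \ref{th1} and Lemmas \ref{lemma2}, \ref{lemma3}.

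\textbf{Base case.} For $n=0$ we have $f_0=1$ and $f_1=k$, so $f_1-2=k-2$, $f_1-1=k-1$. Here $i\equiv_1 m$ just means $\tau_0(i)=\tau_0(m)$. By Lemma \ref{lemma2}, $\varepsilon_i$ is determined by $\tau_0(i)$: it equals $1$ exactly when $\tau_0(i)=k$, and $0$ otherwise. The map $i\mapsto i+1$ changes $\tau_0$ by $+1$ unless there is a carry, i.e. unless $\tau_0(i)=k$; when $\tau_0(i)\in\{0,\dots,k-1\}$ we get $\tau_0(i+1)=\tau_0(i)+1$, and the only way $\varepsilon_{i+1}\neq\varepsilon_i$ is $\tau_0(i)=k-1$ (giving $\varepsilon_{i+1}-\varepsilon_i=1=(-1)^0$, the ``$f_1-2$'' case) or $\tau_0(i)=k$, which after the carry forces $\tau_0(i+1)=0$ and gives $\varepsilon_{i+1}-\varepsilon_i=-1=(-1)^{1}$ (the ``$f_1-1$'' case). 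A short check handles the carry-producing positions via Lemma \ref{lemma3}. This matches the claimed formula.

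\textbf{Inductive step.} Assume the statement for all indices up to $n-1$; we prove it for $n$. Write the regular expression $i=\sum_{j\ge0}\tau_j(i)f_j$. The key is to relate $\varepsilon_{i+f_n}$ to $\varepsilon_i$ through a chain: using $f_{n+1}=kf_n+f_{n-1}$ (formula \ref{formula2}) and Corollary \ref{corollary1}, on the ``bulk'' of each block $U_n$ the sequence $\varepsilon$ is shift-invariant by $f_n$, and the only positions where $\varepsilon_{i+f_n}\neq\varepsilon_i$ are the last two letters of the relevant occurrence of $U_{n+1}$, which is precisely the description $i\equiv_{n+1}f_{n+1}-2$ or $i\equiv_{n+1}f_{n+1}-1$. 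More concretely, I would split according to $\tau_n(i)$ and the lower digits $\tau_0(i),\dots,\tau_{n-1}(i)$: if the lower-order part of $i$ (its residue determined by digits below index $n$) is not near $f_n-2,f_n-1$, then adding $f_n$ simply increments $\tau_n$ with no interaction with the tail, and Corollary \ref{corollary1} gives $\varepsilon_{i+f_n}=\varepsilon_i$; the exceptional residues are exactly those congruent mod the lower digits to $f_n-2$ or $f_n-1$, and then one applies the induction hypothesis at level $n-1$ (noting $f_n = kf_{n-1}+f_{n-2}$, so a shift by $f_n$ decomposes into $k$ shifts by $f_{n-1}$ plus one by $f_{n-2}$, whose net sign effect on $\varepsilon$ telescopes to $(-1)^n$ or $(-1)^{n+1}$). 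Throughout, Lemma \ref{lemma3} is used to certify that the intermediate (possibly non-regular) expressions one writes down still agree with the true regular digits at all indices $< n+1$, so that the relation ``$\equiv_{n+1}$'' is correctly propagated.

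\textbf{Main obstacle.} The delicate point is the bookkeeping at the boundary: showing that adding $f_n$ to $i$ never produces a carry that reaches index $n+1$ or beyond \emph{except} precisely in the two residue classes named, and that in those two classes the cascade of carries produces exactly the sign pattern $(-1)^n$ resp.\ $(-1)^{n+1}$ rather than $0$ or the wrong sign. This requires carefully tracking how the regular expression of $i+f_n$ differs from that of $i$ — a carry analysis in the $(f_j)$ numeration system — and it is here that the constraint ``if $\tau_{j+1}=k$ then $\tau_j=0$'' from Theorem \ref{th1}, combined with Lemma \ref{lemma3}, does the real work. Once the carry structure is pinned down, matching it against Lemma \ref{lemma2} to read off the actual change in $\varepsilon$ is routine. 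I would organize the write-up so that the carry lemma is isolated first, and then the sign computation is a clean consequence of the induction hypothesis applied at level $n-1$.
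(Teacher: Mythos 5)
Your submission is a plan rather than a proof: the entire content of the lemma is the carry analysis in the $(f_j)$-numeration system, and you explicitly defer it (your ``Main obstacle'' paragraph) without carrying it out. The paper does not induct on $n$ at all; it fixes $i$ and $n$, splits into the two cases $\tau_n(i)\neq k$ and $\tau_n(i)=k$, and in the second case uses the identity $(k+1)f_n=f_{n+1}+(k-1)f_{n-1}+f_{n-2}$ to write the expansion of $i+f_n$ explicitly along the maximal chain $\tau_n(i)=\tau_{n-2}(i)=\cdots=\tau_j(i)=k$, then reads off $\varepsilon_{i+f_n}$ versus $\varepsilon_i$ from $\tau_0$ via Lemma \ref{lemma2}. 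Your inductive step does not avoid this work: to apply the induction hypothesis at level $n-1$ to the intermediate points $i+tf_{n-1}$ ($1\le t\le k$) you must know their residues $\equiv_n$, i.e.\ their regular expansions, and computing those is exactly the carry analysis you postponed. You also give no argument that the $k+1$ signed contributions from the sub-shifts ``telescope'' to a single $\pm1$ rather than cancelling or accumulating.

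There are also concrete errors at the base case. First, $f_1=|0^k1|=k+1$, not $k$. Second, the assertion that $i\mapsto i+1$ increments $\tau_0$ whenever $\tau_0(i)<k$ is false: if $\tau_1(i)=k$ (which forces $\tau_0(i)=0$), adding $1$ triggers a cascade of carries (since $f_0+kf_1=f_2$), and $\tau_0(i+1)=0$. This is precisely the case your ``short check'' must handle, and for $k=1$ it is not a repairable detail but a counterexample to the statement itself: take $k=1$, $n=0$, $i=2$; then $\tau_0(2)=0=\tau_0(f_1-2)$, so the lemma predicts $\varepsilon_3\neq\varepsilon_2$, yet $\varepsilon_2=\varepsilon_3=0$. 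Similarly for $k=1$, $n=1$, $i=4$ one has $4\equiv_2 f_2-2$ but $\varepsilon_4=\varepsilon_6=1$. The statement is reliable only for $n\ge2$, where $\tau_n(f_{n+1}-2)=\tau_n(f_{n+1}-1)=k$ forces $\tau_{n+1}(i)\neq k$ on both residue classes; that restricted form is what the proof of Theorem \ref{theorem1} actually uses. Any write-up, by induction or otherwise, must restrict to $n\ge2$ (or impose $\tau_{n+1}(i)\neq k$) before the sign bookkeeping can succeed, so your induction starting from $n=0$ cannot be completed as described.
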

\begin{proof} If $n=0$, it is easy to check that it is true. Now we assume that $n\geq1$. Hence there are two cases for discussion.

Case 1: $n\geq1$, and $\tau_{n}(i)\neq k$.
Then, we have
\begin{eqnarray*}
i+f_{n}=\sum_{j=0}^{n-1}\tau_{j}(i)f_{j}+(\tau_{n}(i)+1)f_{n}+\sum_{j=n+1}^{\infty}\tau_{j}(i)f_{j}.
\end{eqnarray*}

If $n\geq2$, then $\tau_{0}(i+f_{n})=\tau_{0}(i)$ by Lemma \ref{lemma3};

If $n=1,0\leq\tau_{n}(i)\leq k-2$, then $\tau_{0}(i+f_{n})=\tau_{0}(i)$ by Lemma \ref{lemma3};

If $n=1,\tau_{n}(i)=k-1,\tau_{0}(i)=0 $, then $\tau_{0}(i+f_{n})=\tau_{0}(i)$ by Lemma \ref{lemma3};

If $n=1,\tau_{n}(i)=k-1,\tau_{0}(i)\neq0 $, then we have
\begin{eqnarray*}
i+f_{n}=\tau_{0}(i)+k f_{1}+\sum_{j=2}^{\infty}\tau_{j}(i)f_{j}=(\tau_{0}(i)-1)+(\tau_{2}(i)+1)f_{2}+\sum_{j=3}^{\infty}\tau_{j}(i)f_{j}.
\end{eqnarray*}
By Lemma \ref{lemma3}, we have  $\tau_{0}(i+f_{n})=\tau_{0}(i)-1$. Hence, by Lemma \ref{lemma2},
if $1\leq\tau_{0}(i)\leq k-1$, we have $\varepsilon_{i+f_{n}}=\varepsilon_{i}$;
if $\tau_{0}(i)=k$, we have $\varepsilon_{i+f_{n}}\neq\varepsilon_{i}$.

Hence, in the case 1: $n\geq1$, and $\tau_{n}(i)\neq k$. We have
\begin{eqnarray*}
\varepsilon_{i+f_{n}}\neq\varepsilon_{i} &\Leftrightarrow & n=1,\tau_{0}(i)=k,\tau_{n}(i)=k-1, \\&\Leftrightarrow & i\equiv_{n+1}f_{n+1}-2~(n=1).
\end{eqnarray*}

Case 2: $n\geq1$, and $\tau_{n}(i)=k$.
Let $j=\min\{t\geq0:\tau_{n}(i)=\tau_{n-2}(i)=\cdots=\tau_{t}(i)=k\}$. Clearly, $\tau_{j-2}(i)\leq k-1$. By the equation $(k+1)f_{n}=f_{n+1}+(k-1)f_{n-1}+f_{n-2}$, we have
\begin{eqnarray*}
i+f_{n}&=&\sum_{s=0}^{j-3} \tau_{s}(i)f_{s}+(\tau_{j-2}(i)+1)f_{j-2}+(k-1)f_{j-1}\\&+&k f_{j+1}+\cdots+k f_{n-1}+(\tau_{n+1}(i)+1)f_{n+1}+\sum_{s=n+2}^{\infty}\tau_{s}(i)f_{s}.
\end{eqnarray*}
where the first term on the right-hand side vanishes if $j = 0, 1, 2$. Hence

If $j\geq4$, by Lemma \ref{lemma3}, we have $\tau_{0}(i+f_{n})=\tau_{0}(i)$;

If $j=3$, then $\tau_{1}(i)\leq k-1$ and $i+f_{n}=\tau_{0}(i)f_{0}+(\tau_{1}(i)+1)f_{1}+(k-1)f_{2}+k f_{4}+\cdots+k f_{n-1}+(\tau_{n+1}(i)+1)f_{n+1}+\sum_{s=n+2}^{\infty}\tau_{s}(i)f_{s}$.  Hence there are following subcases;

\quad If $\tau_{0}(i)=0$, by Lemma \ref{lemma3}, we have $\tau_{0}(i+f_{n})=\tau_{0}(i)$;

\quad If $\tau_{0}(i)\neq0, \tau_{1}(i)\leq k-2$, by Lemma \ref{lemma3}, we have $\tau_{0}(i+f_{n})=\tau_{0}(i)$;

\quad If $\tau_{0}(i)\neq0, \tau_{1}(i)=k-1$, hence if $\tau_{0}(i)=k$, then $\varepsilon_{i+f_{n}}\neq\varepsilon_{i}$, and if $\tau_{0}(i)\neq k$, then $\varepsilon_{i+f_{n}}=\varepsilon_{i}$.

If $j=2$, then $\tau_{0}(i)\leq k-1$ and $i+f_{n}=(\tau_{0}(i)+1)f_{0}+(k-1)f_{1}+k f_{3}+\cdots+k f_{n-1}+(\tau_{n+1}(i)+1)f_{n+1}+\sum_{s=n+2}^{\infty}\tau_{s}(i)f_{s}.$
By Lemma \ref{lemma3}, we have $\tau_{0}(i+f_{n})=\tau_{0}(i)+1$. So if $\tau_{0}(i)=k-1$, then $\varepsilon_{i+f_{n}}\neq\varepsilon_{i}$;

If $j=1$, then $\tau_{0}(i)=0$ and $i+f_{n}=k f_{0}+k f_{2}+\cdots+k f_{n-1}+(\tau_{n+1}(i)+1)f_{n+1}+\sum_{s=n+2}^{\infty}\tau_{s}(i)f_{s}.$
Hence $\tau_{0}(i+f_{n})=k$, so we have $\varepsilon_{i+f_{n}}\neq\varepsilon_{i}$;

If $j=0$, then $\tau_{0}(i)=k$ and $i+f_{n}=k f_{1}+k f_{3}+\cdots+k f_{n-1}+(\tau_{n+1}(i)+1)f_{n+1}+\sum_{s=n+2}^{\infty}\tau_{s}(i)f_{s}.$
Hence, $\varepsilon_{i+f_{n}}\neq\varepsilon_{i}$.

Combining all discussions of the second case, we obtain the first part.

The second part follows from Lemma \ref{lemma2} directly.
\end{proof}

\section{Proofs of Theorem \ref{theorem1} and Proposition \ref{p1}}
To prove Theorem \ref{theorem1}, we need following lemma. We state it as follow without proof.
\begin{lemma}[Adamaczewski and Rivoal \cite{AR09}]\label{lemma6}
Let $\xi,\alpha,\beta,\gamma\in\mathbb{R}$ and $\alpha\leq\beta$, $\gamma>1$. Assume that there exist positive real numbers $c_{0},c_{1}\leq c_{2}$ and a sequence $(\frac{p_{n}}{q_{n}})_{n\geq1}$ of
rational numbers such that
\begin{equation*}
q_{n}<q_{n+1}\leq c_{0}q_{n}^{\gamma},(n\geq1),
\end{equation*}
and
\begin{equation*}
\frac{c_{1}}{q_{n}^{1+\beta}}\leq|\xi-\frac{p_{n}}{q_{n}}|\leq\frac{c_{2}}{q_{n}^{1+\alpha}},(n\geq1).
\end{equation*}
Then $\mu(\xi)\leq(1+\beta)\frac{\gamma}{\alpha}$.
\end{lemma}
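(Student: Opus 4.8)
The plan is to show that there are a constant $C>0$ and an integer $q_{0}$ such that $|\xi-p/q|\ge C\,q^{-(1+\beta)\gamma/\alpha}$ for every rational $p/q$ with $q\ge q_{0}$; this immediately gives $\mu(\xi)\le(1+\beta)\gamma/\alpha$, because for any $\mu>(1+\beta)\gamma/\alpha$ the inequality $|\xi-p/q|<q^{-\mu}$ then has only finitely many solutions (for a fixed $q$ it confines $p$ to a short interval, so any infinite family of solutions would have $q\to\infty$, which the displayed bound rules out). Note the hypotheses force $\alpha>0$, since otherwise the asserted bound does not even exceed the universal lower bound $2$.

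Fix $p/q$ with $q$ large and choose the comparison index $n=n(q)$ to be the least integer with $q_{n}^{\alpha}\ge 2c_{2}q$. Since $q_{n}\to\infty$, such an $n$ exists and satisfies $n\ge 2$ once $q$ exceeds a fixed bound, so $q_{n-1}$ is defined and, by minimality, $q_{n-1}^{\alpha}<2c_{2}q$. The only use of the growth hypothesis is then the estimate $q_{n}\le c_{0}q_{n-1}^{\gamma}=c_{0}\bigl(q_{n-1}^{\alpha}\bigr)^{\gamma/\alpha}<c_{0}(2c_{2})^{\gamma/\alpha}q^{\gamma/\alpha}=:C_{3}\,q^{\gamma/\alpha}$.

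Now distinguish two cases. If $p/q=p_{n}/q_{n}$, the lower estimate of the lemma together with the bound on $q_{n}$ gives $|\xi-p/q|=|\xi-p_{n}/q_{n}|\ge c_{1}q_{n}^{-(1+\beta)}\ge c_{1}C_{3}^{-(1+\beta)}q^{-(1+\beta)\gamma/\alpha}$, which is exactly the target exponent. If $p/q\ne p_{n}/q_{n}$, then $|p/q-p_{n}/q_{n}|\ge 1/(qq_{n})$, while the choice of $n$ yields $c_{2}q_{n}^{-(1+\alpha)}=c_{2}q_{n}^{-1}q_{n}^{-\alpha}\le \frac{1}{2}q^{-1}q_{n}^{-1}$; hence, by the triangle inequality and the upper estimate, $|\xi-p/q|\ge \frac{1}{qq_{n}}-c_{2}q_{n}^{-(1+\alpha)}\ge \frac{1}{2qq_{n}}\ge \frac{1}{2C_{3}}q^{-(1+\gamma/\alpha)}$. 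Since $\alpha\le\beta$ and $\gamma>1$ we have $\alpha\le\beta\gamma$, equivalently $1+\gamma/\alpha\le(1+\beta)\gamma/\alpha$, so in both cases $|\xi-p/q|\ge C\,q^{-(1+\beta)\gamma/\alpha}$ with $C=\min\{c_{1}C_{3}^{-(1+\beta)},\,\tfrac{1}{2C_{3}}\}$, which completes the argument.

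The step I expect to be the crux is the case split, and specifically recognizing why the lower bound $c_{1}q_{n}^{-(1+\beta)}\le|\xi-p_{n}/q_{n}|$ is indispensable: it is precisely what is needed to handle $p/q=p_{n}/q_{n}$, i.e. to certify that the prescribed approximants are not themselves anomalously sharp; without it one could only control the fractions that differ from every $p_{n}/q_{n}$. The remaining work is routine bookkeeping with the constants $c_{0},c_{1},c_{2}$ and verifying that $n(q)$ is well behaved for all large $q$.
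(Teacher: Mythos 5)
The paper does not actually prove this lemma: it is quoted from Adamczewski and Rivoal \cite{AR09} and explicitly ``stated without proof,'' so there is no in-paper argument to compare against. Your proof is correct and is the standard argument for transfer lemmas of this type: choosing $n(q)$ as the least index with $q_n^{\alpha}\ge 2c_2q$, deducing $q_n\le c_0(2c_2)^{\gamma/\alpha}q^{\gamma/\alpha}$ from the growth hypothesis applied at $n-1$, and splitting into the case $p/q=p_n/q_n$ (where the lower estimate $|\xi-p_n/q_n|\ge c_1q_n^{-(1+\beta)}$ is exactly what is needed) and the case $p/q\ne p_n/q_n$ (where $|p/q-p_n/q_n|\ge 1/(qq_n)$ together with the upper estimate yields $|\xi-p/q|\ge 1/(2qq_n)$); the final exponent comparison $1+\gamma/\alpha\le(1+\beta)\gamma/\alpha$ does follow from $0<\alpha\le\beta$ and $\gamma>1$ as you argue. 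The one caveat is that the statement as printed omits the hypothesis $\alpha>0$, which you rightly point out is indispensable (and which holds in the paper's application, where $\alpha=\theta-\epsilon$).
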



Now, we are going to prove Theorem \ref{theorem1}.

{\bf Proof of Theorem \ref{theorem1}}
For any integer $k\geq1$, let $\varphi_{k}(z)=\sum_{i\geq0}\varepsilon_{i}z^{i}$ be the generating function of the sequence $\varepsilon$.
For any integer $n\geq2$, define integer polynomials  $P_{n}(z)=\varepsilon_{0}+\varepsilon_{1}z+\cdots+\varepsilon_{f_{n}-1}z^{f_{n}-1}$,
$Q_{n}(z)=1-z^{f_{n}}$, then, by Corollary \ref{corollary1},
\begin{eqnarray*}
 Q_{n}(z)\varphi_{k}(z)-P_{n}(z)&=&\sum_{i\geq0}(\varepsilon_{f_{n}+i}-\varepsilon_{i})z^{f_{n}+i} \\
   &=& \sum_{i\geq f_{n+1}-2}(\varepsilon_{f_{n}+i}-\varepsilon_{i})z^{f_{n}+i}.
\end{eqnarray*}


Set $\delta_{n}(z):=\varphi_{k}(z)-\frac{P_{n}(z)}{Q_{n}(z)}$ , by Lemma \ref{lemma4}, then
\begin{eqnarray*}
\delta_{n}(z)&=&\frac{1}{1-z^{f_{n}}}\cdot z^{f_{n}+f_{n+1}-2}[(\varepsilon_{f_{n}+f_{n+1}-2}-\varepsilon_{f_{n+1}-2})+(\varepsilon_{f_{n}+f_{n+1}-1}-\varepsilon_{f_{n+1}-1})z+\cdots]
\\&=&\frac{1}{1-z^{f_{n}}}\cdot z^{f_{n}+f_{n+1}-2}[(-1)^{n}+(-1)^{n-1}z+(-1)^{n}z^{f_{n+1}}+(-1)^{n-1}z^{f_{n+1}+1}+\cdots]
\\&=&\frac{1}{1-z^{f_{n}}}\cdot z^{f_{n}+f_{n+1}-2}[(-1)^{n}+(-1)^{n-1}z]\sum_{\begin {subarray}{c} j\geq0,\\\tau_{0}(j)\neq k.\end {subarray}}z^{\sum_{i\geq0}\tau_{i}(j)f_{n+1+i}}.
\end{eqnarray*}

Let $b\geq2$ be an integer. Taking $z=\frac{1}{b}$, we have
\begin{equation*}
  \left|\delta_{n}(\frac{1}{b})\right|=\frac{b^{f_{n}}}{b^{f_{n}}-1}\cdot\frac{1}{b^{f_{n}+f_{n+1}-2}}\cdot\left|(-1)^{n}+(-1)^{n-1}\frac{1}{b}\right|\cdot\left|\sum_{\begin {subarray}{c} j\geq0,\\\tau_{0}(j)\neq k\end {subarray}}b^{-\sum_{i\geq0}\tau_{i}(j)f_{n+1+i}}\right|.
\end{equation*}

Hence,
\begin{equation*}
  \left|\delta_{n}(\frac{1}{b})\right|\leq\frac{b^{f_{n}}}{b^{f_{n}}-1}\cdot\frac{1}{b^{f_{n}+f_{n+1}-2}}\cdot(1-\frac{1}{b})\sum_{i\geq0}\frac{1}{b^{i}}
=\frac{1}{b^{f_{n}}-1}\cdot\frac{1}{b^{f_{n+1}-2}}
\end{equation*}
and
\begin{equation*}
   \left|\delta_{n}(\frac{1}{b})\right|\geq\frac{b^{f_{n}}}{b^{f_{n}}-1}\cdot\frac{1}{b^{f_{n}+f_{n+1}-2}}\cdot(1-\frac{1}{b})
=\frac{1}{b^{f_{n}}-1}\cdot\frac{1}{b^{f_{n+1}-2}}\cdot\frac{b-1}{b}.
\end{equation*}

Thus, for any integer $b\geq2$,
\begin{equation}\label{formula3}
\frac{1}{b^{f_{n}}-1}\cdot\frac{1}{b^{f_{n+1}-2}}\cdot\frac{b-1}{b}\leq\left|\delta_{n}(\frac{1}{b})\right|\leq\frac{1}{b^{f_{n}}-1}\cdot\frac{1}{b^{f_{n+1}-2}}.
\end{equation}

Define integers
\begin{eqnarray*}
p_{n}=b^{f_{n}}\cdot P_{n}(\frac{1}{b})\in\mathbb{Z},
\end{eqnarray*}
\begin{eqnarray*}
q_{n}=b^{f_{n}}\cdot Q_{n}(\frac{1}{b})=b^{f_{n}}-1\in\mathbb{Z}.
\end{eqnarray*}

By Formula (\ref{formula3}), there exist positive numbers $c_{0},c_{1}\leq c_{2}$, depending only on $b$, such that
\begin{equation}\label{formula4}
q_{n}<q_{n+1}\leq c_{0}q_{n}^{\frac{f_{n+1}}{f_{n}}},(n\geq1),
\end{equation}

and
\begin{equation}\label{formula5}
\frac{c_{1}}{q_{n}^{1+\frac{f_{n+1}}{f_{n}}}}\leq|\varphi_{k}(\frac{1}{b})-\frac{p_{n}}{q_{n}}|\leq \frac{c_{2}}{q_{n}^{1+\frac{f_{n+1}}{f_{n}}}},(n\geq1).
\end{equation}

Let $\theta:=\lim_{n\rightarrow\infty}\theta_{n}=\lim_{n\rightarrow\infty}\frac{f_{n+1}}{f_{n}}=\frac{k+\sqrt{k^{2}+4}}{2}$. Then for any real number $\epsilon\geq0$, by Formula (\ref{formula4}) and Formula (\ref{formula5}), there exists a large number $n_{0}(\varepsilon)$, such that

\begin{eqnarray*}
q_{n}<q_{n+1}\leq c_{0}q_{n}^{\theta+\epsilon},(n\geq n_{0}(\epsilon));
\end{eqnarray*}

and
\begin{eqnarray*}
\frac{c_{1}}{q_{n}^{\theta+\epsilon}}\leq|\varphi_{k}(\frac{1}{b})-\frac{p_{n}}{q_{n}}|\leq \frac{c_{2}}{q_{n}^{\theta-\epsilon}},(n\geq n_{0}(\epsilon)).
\end{eqnarray*}

By Lemma \ref{lemma6}, we have $\theta-\epsilon\leq\mu(\varphi_{k}(\frac{1}{b}))\leq (\theta+\epsilon)\frac{\theta+\epsilon}{\theta-\epsilon}$.
Let $\epsilon\rightarrow0$, we have $\mu(\varphi_{k}(\frac{1}{b}))=1+\theta$ for any integer $b\geq2$, which completes this proof.

Then we are going to prove Proposition \ref{p1}.

{\bf Proof of Proposition \ref{p1}}
Let $\mathbf{v}=\{v_i\}_{i\geq0}=\mathbf{u}\times S(\mathbf{u})=\{[u_{i},u_{i+1}]\}_{i\geq0}$.
Note that the value of $[u_{i},u_{i+1}]$ is determined by the block $``u_{i}u_{i+1}"$, but there are exactly $3$ blocks with length $2$ in any Sturmian sequence. Without loss of generality, assume
\begin{equation*}
  \{u_iu_{i+1}:i\geq0\}=\{01,10,11\}.
\end{equation*}
Then there exist $a_{i}\in\mathbb{Q}~(0\leq i\leq 2)$ such that
\begin{equation*}
    [u_{i},u_{i+1}]=a_{0}u_{i}+a_{1}u_{i+1}+a_{2}.
\end{equation*}

In fact, $a_0=[1,1]-[0,1],a_1=[1,1]-[1,0],a_2=[0,1]+[1,0]-[1,1]$. Since $\mathbf{v}$ is non-eventually periodic, at least one of $a_0,a_1$ is not zero.
Thus, we have
\begin{eqnarray*}
  \xi_{\mathbf{v},b} &=& \sum_{i\geq0}v_{i}\cdot\frac{1}{b^{i}} \\
  &=& \sum_{i\geq0}(a_{0}u_{i}+a_{1}u_{i+1}+a_{2})\cdot\frac{1}{b^{i}} \\
  &=& a_{0}\sum_{i\geq0}\frac{u_{i}}{b^{i}}+ a_{1}\sum_{i\geq0}\frac{u_{i+1}}{b^{i}}+a_{2}\frac{b}{b-1}\\
  &=& a_{0}\xi_{\mathbf{u},b}+a_{1}b(\xi_{\mathbf{u},b}-u_{0})+a_{2}\frac{b}{b-1}.
\end{eqnarray*}

By the definition of irrationality exponent, we have, for any integer $b\geq2$, $\mu(\xi_{\mathbf{u}\times S(\mathbf{u}),b})=\mu(\xi_{\mathbf{u},b})=a$, which completes this proof.

{\bf  Proof of Corollary \ref{c2}}
By the definition of the difference operation, we see that for $k\geq1$, $\Delta^{k}(\mathbf{u})=\left(\sum_{i=0}^{k}{k \choose i}u(n+i) \right)\mod2$.
Hence, the value of $\Delta^{k}(\mathbf{u})$ is determined by the block $``u(n)u(n+1)\cdots u(n+k)"$, and there are exactly $k+2$ blocks with length $k+1$ in any Sturmian sequence.
Hence, by a same method of Proposition \ref{p1}, we have for any integer $k\geq1$, $\mu(\xi_{\Delta^{k}(\mathbf{u}),b})=\mu(\xi_{\mathbf{u},b})=a$ for any integer $b\geq2$.

\noindent\textbf{Acknowledgements. }
The authors gratefully thank Professor Jeffrey Shallit for his helpful suggestions.
\medskip

\medskip

\end{document}